\newtheorem{theorem}{Theorem}[section]
\newtheorem{lemma}[theorem]{Lemma}
\theoremstyle{definition}
\newtheorem{definition}[theorem]{Definition}
\newtheorem{example}[theorem]{Example}
\theoremstyle{remark}
\newtheorem{remark}[theorem]{Remark}
\numberwithin{equation}{section}
\newcommand{\myK}{\mathbf{\Delta Gpd}}
\newcommand{\myR}{\mathbf{Ring}}
\begin{document}

\title{Delta-groupoids and ideal triangulations}

%    Information for first author
\author{R. M.  Kashaev}
%    Address of record for the research reported here
\address{Universit\'e de Gen\`eve,
Section de math\'ematiques,
2-4, rue du Li\`evre,
CP 64,
1211 Gen\`eve 4, Suisse}
%    Current address
%\curraddr{Department of Mathematics and Statistics,
%Case Western Reserve University, Cleveland, Ohio 43403}
\email{rinat.kashaev@unige.ch}
%    \thanks will become a 1st page footnote.
\thanks{The work is supported by the Swiss National Science
  Foundation.}

%    Information for second author
%\author{Author Two}
%\address{Mathematical Research Section, School of Mathematical Sciences,
%Australian National University, Canberra ACT 2601, Australia}
%\email{two@maths.univ.edu.au}
%\thanks{Support information for the second author.}

%    General info
\subjclass{Primary 20L05, 57M27; Secondary 16S10}
\date{January 17, 2010}

%\dedicatory{This paper is dedicated to our advisors.}

\keywords{Low-dimensional topology, knot theory}

\begin{abstract}
A $\Delta$-groupoid is an algebraic structure which axiomatizes the combinatorics of a truncated tetrahedron. By considering two simplest examples coming from knot theory, we illustrate how can one associate a $\Delta$-groupoid to  an ideal triangulation of a three-manifold. We also describe in detail the rings associated with the $\Delta$-groupoids of these examples.
\end{abstract}

\maketitle
\section{Introduction}
This paper is about the algebraic structure called $\Delta$-groupoid introduced in the papers~\cite{K2,K3}. The motivation for studying this structure comes from the combinatorial descriptions of Teichm\"uller spaces of punctured surfaces \cite{Thurston,Penner,Bonahon} which were useful for quantization \cite{Fock-Chekhov,K1}, and from the fact that it permits to describe essential parts of the set of representations of the fundamental group of three-manifolds into matrix groups over arbitrary rings \cite{K3}.

In this paper, we relate the definition of the $\Delta$-groupoid to the combinatorics of truncated tetrahedra, and, by using two examples of ideal triangulations of complements of the trefoil and the figure-eight knots, we illustrate how ideal triangulations lead to presentations of $\Delta$-groupoids. Relations of $\Delta$-groupoids to representations of the fundamental group of three-manifolds into two-by-two matrix groups over arbitrary rings imply that there are at least two pairs of adjoint functors between the categories of (unital) rings and $\Delta$-groupoids. These adjoint functors permit to associate to any $\Delta$-groupoid a pair of rings to be referred as $A'$ and $B'$ rings.  We  work out in detail the structure of these rings in our examples. It should be remarked that the isomorphism classes of these rings are not three-manifold invariants as they depend on the choice of triangulation. In a sense, different ideal triangulations correspond to different local charts in the moduli space of representations of the fundamental group of a manifold into a given matrix group. In analogy with two charts related by a transformation on their common part, the rings, associated with two different triangulations of one and the same three-manifold, admit a common localization.  The situation here is very similar the one with deformation varieties \cite{Thurston,Yoshida,Tillmann}, where to cover the moduli space of $PSL_2(\mathbb{C})$-representations, one may need several ideal triangulations. In the case of $PSL_2(\mathbb{R})$-representations of surface groups, this kind of phenomenon has been demonstrated in \cite{K4}. In general, the $A'$ ring is a quotient ring of the $B'$ ring, so that in principle it is enough to calculate only the $B'$-ring, but in practice  the $A'$ can be simpler to calculate.

In the case of the trefoil knot, the two rings are commutative and both are isomorphic to the ring $\mathbb{Z}[t,3^{-1}]/(\Delta_{3_1}(t))$, where $\Delta_{3_1}(t)=t^2-t+1$ is the Alexander polynomial of the trefoil knot. The case of the figure-eight knot is less trivial and  reveals interesting features: both rings are non-commutative and not isomorphic to each other.  The non-commutativity, however, is rather mild in the sense that the rings are quotients of formalized versions of two-by-two matrix rings over commutative rings. In this example,  the $B'$-ring contains a non-trivial torsion, namely, an element $\epsilon$ satisfying the equation $5\epsilon=0$, and which disappears on the level of the $A'$-ring.

 The organization of the paper is as follows. In Section~\ref{mysec:1}, by studying certain combinatorial features of a truncated tetrahedron, we naturally arrive to the definition of the $\Delta$-groupoid. Then, after giving a list of examples of $\Delta$-groupoids, we define the notion of a labeled truncated tetrahedron, and introduce a graphical notation for it. In Section~\ref{mysec:2} we consider in detail two simplest examples of ideal triangulations (both consisting of only two ideal tetrahedra) of the complements of the  trefoil and the figure-eight knots. In the part dedicated to the figure-eight knot, for technical reasons, we introduce the class of formal $M_2$-rings which contains as a sub-class the rings of two-by-two matrices over commutative rings. The main result of the paper is given by Theorem~\ref{T:main}.

\section{Truncated tetrahedra and $\Delta$-groupoids}\label{mysec:1}
Consider a truncated tetrahedron with its natural cell decomposition drawn in this picture
\begin{center}
\begin{tikzpicture}[>=stealth,scale=.3]
\draw[thick,fill=black!10] (0,0)--(60:1)--(1,0)--cycle;\draw[thick,fill=black!10] (-1,0)++(-120:1)--++(-1,0)--+(-60:1)--cycle;
\draw[thick,fill=black!10] (60:2)++(120:1)--++(60:1)--+(-1,0)--cycle;\draw[thick,fill=black!10] (2,0)++(-60:1)--++(1,0)--+(-120:1)--cycle;
\draw[black!50] (-1,0)+(-120:1)--(0,0);\draw[black!50] (2,0)+(-60:1)--(1,0);\draw[black!50] (60:2)+(120:1)--(60:1);
\draw[black!50] (-2,0)++(-120:1)--+(60:5);\draw[black!50] (3,0)++(-60:1)--+(120:5);\draw[black!50] (-120:2)++(-1,0)--+(5,0);
\end{tikzpicture}
\end{center}
This is a three-dimensional cell complex which consists of twelve  $0$-cells (vertices), eighteen  $1$-cells (edges), eight $2$-cells (faces), and one $3$-cell. There are two types of $2$-cells: the triangular faces (those which are shaded in the above picture) and the hexagonal faces which are remnants of the faces of the corresponding non-truncated tetrahedron. Note that there are four $2$-cells of each type. There are also two types of edges: twelve \emph{short} edges (those drawn by thick lines in the picture) which bound the triangular faces and six \emph{long} edges (drawn by thin lines) which are remnants of the edges of the non-truncated tetrahedron.

The set of triangular faces  constitutes a two-dimensional cell sub-complex whose fundamental edge-path groupoid will be denoted $G$. Algebraically, this is not very interesting groupoid: it consists of four connected components (one component for each triangular face) any of which is isomorphic to one and the same coarse groupoid over a three-element set\footnote{A \emph{coarse} groupoid over a set $S$ is the groupoid $S^2$ where the set of objects is $S$ and the set of morphisms (arrows) is the cartesian square of $S$ with the domain and the codomain maps being the projections to the first and second components respectively, and the product $(x,y)(y,z)=(x,z)$.}, but it becomes more interesting due to the following additional structure.

We remark that the set $H\subset G$ of oriented short edges is naturally a free $S_3$-set, where $S_3$ is the permutation group of three elements. Indeed, the elements of $H$ can be grouped into four six-element families associated with hexagonal faces, and a free transitive action of the group $S_3$ within each family is defined through realization of the three elementary transpositions in $S_3$ by three involutions $i,j,k\colon H\to H$, where $i(x)=x^{-1}$ while $j(x)$ and $k(x)$ are the unique oriented short edges such that the domain (respectively the codomain) of $x$ belongs to the same long edge as the domain (respectively the codomain) of $j(x)$ (respectively $k(x)$) as is described in this picture:
\begin{center}
\begin{tikzpicture}[>=stealth,scale=0.5]
\draw[thick](0,0)--(120:1);\draw[thick](2,0)--+(60:1);\draw[thick](60:3)--+(-1,0);
\draw[black!50](0,0)--(2,0);\draw[black!50](120:1)--+(60:2);\draw[black!50](60:3)--+(-60:2);
\draw[thick,->] (120:.501)--(120:.5);\draw[thick,->] (60:3)+(-.501,0)--+(-.5,0);
\draw[thick,->] (2,0)+(60:.5)--+(60:.501);
\draw (1,2.4) node{\tiny $x$};\draw (.4,.6) node{\tiny $j(x)$};\draw (1.7,.6) node{\tiny $k(x)$};
\end{tikzpicture}
\end{center}
It is easy to check that the defining relations of the group $S_3$ are satisfied, in particular, the equalities
\begin{equation}\label{E:1}
k=iji=jij.
\end{equation}

 As the inversion map $i$ and the product operation are related through the identity $i(xy)=i(y)i(x)$, the following question arises: are there some other identities in the groupoid $G$ involving the involutions $i,j,k$ and the product operation? To answer this question,
 let us say that a pair of elements $(x,y)\in H\times H$ is $H$-\emph{composable} if it is composable in $G$ and $xy\in H$. Not any composable pair is $H$-composable, for example, for any $x\in H$, the pair $(x,x^{-1})$ is composable but not $H$-composable.

 Picking up an arbitrary $H$-composable pair $(x,y)$, we can label all the elements of the set $H$ according to this picture
\begin{center}
\begin{tikzpicture}[>=stealth,scale=.7]
\draw[thick,fill=black!10] (0,0)--(60:1)--(1,0)--cycle;\draw[thick,fill=black!10] (-1,0)++(-120:1)--++(-1,0)--+(-60:1)--cycle;
\draw[thick,fill=black!10] (60:2)++(120:1)--++(60:1)--+(-1,0)--cycle;\draw[thick,fill=black!10] (2,0)++(-60:1)--++(1,0)--+(-120:1)--cycle;
\draw[black!50] (-1,0)+(-120:1)--(0,0);\draw[black!50] (2,0)+(-60:1)--(1,0);\draw[black!50] (60:2)+(120:1)--(60:1);
\draw[black!50] (-2,0)++(-120:1)--+(60:5);\draw[black!50] (3,0)++(-60:1)--+(120:5);\draw[black!50] (-120:2)++(-1,0)--+(5,0);
\draw[thick,->] (60:.5)--(60:.501);\draw[thick,->] (60:1)+(-60:.5)--+(-60:.501);
\draw[thick,->] (60:2)+(120:1.501)--+(120:1.5);\draw[thick,->] (120:1)+(60:2.5)--+(60:2.501);
\draw[thick,->] (.5,0)--(.501,0);\draw[thick,->] (120:2)++(60:2)+(.5,0)--+(.501,0);
\draw[thick,->] (-1,0)++(-120:1)+(-.501,0)--+(-.5,0);
\draw[thick,->] (2,0)++(-60:1)+(.5,0)--+(.501,0);
\draw[thick,->] (-2,0)++(-120:1)+(-60:.5)--+(-60:.501);
\draw[thick,->] (-1,0)+(-120:1.5)--+(-120:1.501);
\draw[thick,->] (2,0)+(-60:1.501)--+(-60:1.5);
\draw[thick,->] (2,0)++(-60:2)+(60:.5)--+(60:.501);
\draw (-.2,.5) node{\tiny $k(x)$};\draw (1.2,.5) node{\tiny $j(y)$};\draw (.01,3) node{\tiny $x$};
\draw (.93,3) node{\tiny $y$};\draw (-1.9,-.6) node{\tiny $j(x)$};\draw (2.95,-.6) node{\tiny $k(y)$};
\draw (.5,3.63) node{\tiny $xy$};
\draw (.5,-.3) node{\tiny $k(x)j(y)$};
\draw (-2.7,-1.5) node{\tiny $j(xy)$};\draw (-.9,-1.5) node{\tiny $j(k(x)j(y))$};
\draw (1.8,-1.5) node{\tiny $k(k(x)j(y))$};\draw (3.7,-1.5) node{\tiny $k(xy)$};
\end{tikzpicture}
\end{center}
from where, in particular, we observe that the pair $(k(x),j(y))$ is also $H$-composable and the following identities hold true:
\[
j(xy)=j(x)j(k(x)j(y)),\quad k(xy)=k(k(x)j(y))k(y).
\]
Moreover, taking into account equalities~\eqref{E:1}, the latter two identities are not independent, and in fact one follows from another as shows the following calculation:
\begin{multline*}
k(xy)=iji(xy)=ij(i(y)i(x))=i(ji(y)j(ki(y)ji(x)))\\=ij(ki(y)ji(x))iji(y)=iji(iji(x)iki(y))iji(y)=
k(k(x)j(y))k(y).
\end{multline*}
All of above motivates the following definition~\cite{K3}.
\begin{definition}\label{def:delta}
A \emph{  $\Delta$-groupoid} is a groupoid $G$, a generating subset $H\subset G$, and an involution $j\colon H\to H$, such that
\begin{itemize}
\item[(i)] $i(H)=H$, where $i(x)=x^{-1}$;
\item[(ii)] the involutions $i$ and $j$ generate an action of the symmetric group $S_3$ on the set $H$, i.e. the following equation is satisfied: $iji=jij$;
\item[(iii)] if $(x,y)\in H^2$ is a composable pair then $(k(x),j(y))$ is also a composable pair, where $k=iji$;
\item[(iv)] if $(x,y)\in H^2$ is an $H$-composable pair then $(k(x),j(y))$ is also an $H$-composable pair, and
the following identity is satisfied:
\begin{equation}\label{myeq:1}
j(xy)=j(x)j(k(x)j(y)).
\end{equation}
\end{itemize}
A \emph{$\Delta$-group} is a $\Delta$-groupoid  with one object (identity element).
\end{definition}
In the category $\myK$ of   $\Delta$-groupoids a morphism  between two $\Delta$-groupoids is a groupoid morphism $f\colon G\to G'$ such that $f(H)\subset H'$ and $j'f=fj$.
\begin{remark}\label{rm:inv}
In any $\Delta$-groupoid $G$ there is a canonical involution $A\mapsto A^*$ acting on the set of objects (or the identities) of $G$. It can be defined as follows. Let $\mathrm{dom},\mathrm{cod}\colon G\to \mathrm{Ob}G$ be the domain (source) and the codomain (target) maps, respectively. As $H$ is a generating set for $G$, for any $A\in\mathrm{Ob}G$ there exists $x\in H$ such that $A=\mathrm{dom}(x)$. We define $A^*=\mathrm{dom}(j(x))$. This definition is independent of the choice of $x$. Indeed, let $y\in H$ be any other element satisfying the same condition. Then, the pair $(i(y),x)$  is composable, and, therefore, so is $(ki(y),j(x))$. Thus\footnote{For compositions in groupoids we use the convention adopted for fundamental groupoids of topological spaces, i.e. $(x,y)\in G^2$ is composable iff $\mathrm{cod}(x)=\mathrm{dom}(y)$, and the product is written $xy$ rather than $y\circ x$.},
\[
\mathrm{dom}(j(y))=\mathrm{cod}(ij(y))=\mathrm{cod}(ki(y))=\mathrm{dom}(j(x)).
\]
\end{remark}
\subsection{Examples of $\Delta$-groupoids}
 \begin{example}\label{myex:11}
 Let $G$ be a group. The coarse groupoid $G^{2}$ is a $\Delta$-groupoid with $H=G^2$, $j(f,g)=(f^{-1},f^{-1}g)$.
  \end{example}
 \begin{example}\label{myex:0}
 Let $X$ be a set. The set
 \(
 X^{3 }
 \)
can be thought of as a disjoint sum of coarse groupoids $X^2$ indexed by $X$: $X^3\simeq\sqcup_{x\in X}\{x\}\times X^2$. In particular, $\mathrm{Ob}(X^3)=X^2$ with $\mathrm{dom}(a,b,c)=(a,b)$ and $\mathrm{cod}(a,b,c)=(a,c)$. This is a $\Delta$-groupoid with $H=X^3$ and $j(a,b,c)=(b,a,c)$. In the case where $X$ is a three element set this $\Delta$-groupoid is isomorphic to the one associated with the truncated tetrahedron.
 \end{example}
\begin{example}\label{myex:1}
Let $R$ be a ring. We define a $\Delta$-group $AR$ as the subgroup of the group $R^* $ of invertible elements of $R$ generated by the subset $H=(1-R^* )\cap R^*$ with $k(x)=1-x$.
\end{example}
\begin{example}\label{myex:2}
For a ring $R$, let $R\rtimes R^* $ be the semidirect product of the additive group $R$ with the multiplicative group $R^*$ with respect to the (left) action of $R^* $ on $R$ by left multiplications. Set theoretically, one has $R\rtimes R^* =R\times R^* $, the group structure being given explicitly by the product $(x,y)(u,v)=(x+yu,yv)$,  the unit element $(0,1)$, and the inversion map $(x,y)^{-1}=(-y^{-1}x,y^{-1})$. We define a $\Delta$-group $BR$ as the subgroup of $R\rtimes R^*$ generated by the subset $H=R^* \times R^*$ with $k(x,y)=(y,x)$.
\end{example}
\begin{example}
Let $(G,G_\pm,\theta)$ be a symmetrically factorized group of \cite{KR}. That means that $G$ is a group with two isomorphic subgroups $G_\pm $ conjugated to each other by an involutive element $\theta\in G$, and the restriction of the multiplication  map $m\colon G_+\times G_-\to G_+G_-\subset G$ is a set-theoretical bijection, whose inverse is called the factorization map $G_+G_-\ni g\mapsto (g_+,g_-)\in G_+\times G_-$. In this case, the subgroup of $G_+$ generated by the subset $H=G_+\cap G_-G_+\theta\cap\theta G_+G_-$ is a  $\Delta$-group with $j(x)=(\theta x)_+$.
\end{example}
\begin{example}
Let $A$ be a group and $B\subset A$ a malnormal subgroup.\footnote{A subgroup $B\subset A$ is called \emph{mal-normal} if $aBa^{-1}\cap B=\{1\}$ for any $a\in A\setminus B$.} In this case the (left) action  of the group $B$ on the set of non-trivial left cosets $X=\{ aB\vert\ a\in A\setminus B\}$ is free. Let $G$ be the groupoid associated to this free action\footnote{The groupoid associated to a free $G$-set $X$ has the $G$-orbits in $X$ as objects and the $G$-orbits in $X\times X$ (with respect to the diagonal action) as morphisms, the domain and the codomain maps being respectively $G(x,y)\mapsto Gx$ and $G(x,y)\mapsto Gy$, and the product $(G(x,y),G(y,z))\mapsto G(x,z)$.} and let $H\subset G$ be the set of all non-identity morphisms. Then the triple $(G,H,j)$, where
$j\colon B(aB,a'B)\mapsto B(a^{-1}B,a^{-1}a'B)$, is a $\Delta$-groupoid.
\end{example}
\begin{remark}
The constructions in Examples~\ref{myex:1} and \ref{myex:2} are functorial \cite{K3} and there exist the respective left adjoint functors $A',B'\colon \myK\to\myR$ defined as follows.
Let $G=(G,H,j)$ be a $\Delta$-groupoid. The ring $A'G$ is the quotient ring of the groupoid ring $\mathbb{Z}[G]$  (generated over $\mathbb{Z}$ by the elements $\{ w_x\vert\ x\in G\}$ with the defining relations $w_xw_y=w_{xy}$ if $x,y$ are composable) with respect to the
additional relations  $w_x+w_{k(x)}=1$ for all $x\in H$. The ring $B'G$ is generated over $\mathbb{Z}$ by the elements $\{ u_x, v_x\vert\ x\in G\}$ with the defining relations $u_xu_y=u_{xy}, v_{xy}=u_xv_y+v_x$ if $x,y$ are composable, and $u_{k(x)}=v_x, v_{k(x)}=u_x$ for all $x\in H$.
\end{remark}
\begin{remark}\label{R:nat-tr}
There exists a functorial morphism $\alpha\colon B'\to A'$ defined by the relations
\[
\alpha_G(u_x)=w_x,\ \alpha_G(v_x)=1-w_x.
\]
Moreover,  $\alpha_G$ induces a ring isomorphism $A'G\simeq B'G/I_G$, where $I_G$ is the two-sided ideal generated by the set $\{u_x+v_x-1\vert\ x\in H\}$.
\end{remark}

\subsection{Labeled oriented truncated tetrahedra}
In what follows, in a $\Delta$-groupoid $(G,H,j)$, we will use the following binary operation on the set of $H$-composable pairs: $x*y=j(k(x)j(y))$.

Consider a tetrahedron with totally ordered vertices. The vertex order induces natural orientation of long edges of the corresponding truncated tetrahedron, where the orienting arrows point from smaller to bigger vertices. In each hexagonal face we choose the unique oriented short edge according to this picture
\begin{center}
\begin{tikzpicture}[>=stealth,scale=0.5]
\draw[thick](0,0)--(120:1);\draw[thick](2,0)--+(60:1);\draw[thick](60:3)--+(-1,0);
\draw[black!50](0,0)--(2,0);\draw[black!50](120:1)--+(60:2);\draw[black!50](60:3)--+(-60:2);
\draw[thick,->] (120:.501)--(120:.5);
%\draw[thick,->] (60:3)+(-.501,0)--+(-.5,0);\draw[thick,->] (2,0)+(60:.5)--+(60:.501);
\draw[black!50,->](1.01,0)--(1,0);\draw[black!50,->](120:1)+(60:1)--+(60:.99);
\draw[black!50,->](60:3)+(-60:1.01)--+(-60:1);
\draw (0,.6) node{\tiny $x$};
%\draw (1,2.2) node{\tiny $j(x)$};\draw (1.6,.6) node{\tiny $kj(x)$};
\end{tikzpicture}
\end{center}
where the short edge $x$ is characterized by the following properties:
 \begin{enumerate}
 \item it is located near the biggest (truncated) vertex of the face;
 \item its orientation is antiparallel to the orientation of the opposite long edge.
  \end{enumerate}
  With this choice, our \emph{labeled oriented truncated (l.o.t.)} tetrahedron looks as in this picture
  \begin{center}
\begin{tikzpicture}[>=stealth,scale=.5]
\draw[thick,fill=black!10] (0,0)--(60:1)--(1,0)--cycle;\draw[thick,fill=black!10] (-1,0)++(-120:1)--++(-1,0)--+(-60:1)--cycle;
\draw[thick,fill=black!10] (60:2)++(120:1)--++(60:1)--+(-1,0)--cycle;\draw[thick,fill=black!10] (2,0)++(-60:1)--++(1,0)--+(-120:1)--cycle;
\draw[black!50] (-1,0)+(-120:1)--(0,0);\draw[black!50,->] (-150:.87)--(-150:.86);
\draw[black!50] (2,0)+(-60:1)--(1,0);\draw[black!50,->] (1,0)+(-30:.87)--+(-30:.86);
\draw[black!50] (60:2)+(120:1)--(60:1);\draw[black!50,->] (60:1)+(0,.87)--+(0,.86);
\draw[black!50] (-2,0)++(-120:1)--+(60:5);\draw[black!50,->] (-2,0)++(-120:1)+(60:2.5)--+(60:2.49);
\draw[black!50] (3,0)++(-60:1)--+(120:5);\draw[black!50,->] (3,0)++(-60:1)+(120:2.5)--+(120:2.51);
\draw[black!50] (-120:2)++(-1,0)--+(5,0);\draw[black!50,->] (-120:2)++(-1,0)+(2.5,0)--+(2.49,0);
\draw[thick,->] (60:.5)--(60:.501);\draw[thick,->] (60:1)+(-60:.5)--+(-60:.501);
%\draw[thick,->] (60:2)+(120:1.501)--+(120:1.5);\draw[thick,->] (120:1)+(60:2.5)--+(60:2.501);
\draw[thick,->] (.5,0)--(.501,0);
%\draw[thick,->] (120:2)++(60:2)+(.5,0)--+(.501,0);
%\draw[thick,->] (-1,0)++(-120:1)+(-.501,0)--+(-.5,0);
%\draw[thick,->] (2,0)++(-60:1)+(.5,0)--+(.501,0);
\draw[thick,->] (-2,0)++(-120:1)+(-60:.5)--+(-60:.501);
%\draw[thick,->] (-1,0)+(-120:1.5)--+(-120:1.501);
%\draw[thick,->] (2,0)+(-60:1.501)--+(-60:1.5);
%\draw[thick,->] (2,0)++(-60:2)+(60:.5)--+(60:.501);
\draw (0,.5) node{\tiny $x$};\draw (1,.5) node{\tiny $y$};
%\draw (.01,3) node{\tiny $x$};\draw (.93,3) node{\tiny $y$};
%\draw (-1.9,-.6) node{\tiny $j(x)$};\draw (2.95,-.6) node{\tiny $k(y)$};
%\draw (.5,3.63) node{\tiny $xy$};
\draw (.5,-.2) node{\tiny $u$};\draw (-2.4,-1.5) node{\tiny $v$};
%\draw (-.9,-1.5) node{\tiny $j(k(x)j(y))$};
%\draw (1.8,-1.5) node{\tiny $k(k(x)j(y))$};\draw (3.7,-1.5) node{\tiny $k(xy)$};
\draw (3,-1.2) node{\tiny $0$};\draw (-2,-1.2) node{\tiny $2$};
\draw (.5,3.15) node{\tiny $1$};\draw (.5,.3) node{\tiny $3$};
\end{tikzpicture}
 \end{center}
 where $u=xy$, $v=x*y$, and the numbers within triangular faces enumerate the vertices (according to their order) of the non-truncated tetrahedron. As each hexagonal face is opposite to a unique vertex of the non-truncated tetrahedron, we have an induced order in the set of hexagonal faces.
To express all this information, it will be convenient to use the following graphical notation for such l.o.t. tetrahedra:
 \begin{center}
 \begin{tikzpicture}[scale=.3]
 \draw[very thick] (0,0)--(3,0);
 \draw (0,0)--(0,1);\draw (1,0)--(1,1);\draw (2,0)--(2,1);\draw (3,0)--(3,1);
 \draw (0,1.3) node{\tiny $x$};\draw (1,1.3) node{\tiny $u$};\draw (2,1.3) node{\tiny $y$};\draw (3,1.3) node{\tiny $v$};
 \end{tikzpicture}
 \end{center}
 where the vertical labeled segments, ordered from left to right, correspond to the hexagonal faces with their induced order, and the labels correspond to the distinguished oriented short edges.
\section{Examples of $\Delta$-groupoids from ideal triangulations}\label{mysec:2}
In this section, by considering concrete examples, we show how can one associate a $\Delta$-groupoid to ideal triangulations. One can say that an ideal triangulation gives a presentation of a $\Delta$-groupoid in the same sense as a cell complex gives a presentation for its edge-path groupoid. We also calculate the associated rings obtained by applying the adjoint functors $A'$ and $B'$.
\subsection{The trefoil knot}
There is an ideal triangulation of the complement of the trefoil knot consisting of two tetrahedra described by the following diagram
\begin{equation}\label{P:trefoil}
\begin{tikzpicture}[baseline=5pt,scale=.7]
\draw[very thick] (0,0)--(3,0);\draw[very thick] (0,1)--(3,1);
%\draw[very thick] (4,0)--(7,0);
\draw[thin] (0,0)--node[left]{\tiny $x$}(0,1);
\draw (1,0)--node[left]{\tiny $u$}(1,1);
\draw (2,0)--node[left]{\tiny $y$}(2,1);
\draw (3,0)--node[left]{\tiny $v$}(3,1);
%\draw (0,0)..controls (0,3.2) and (7,3.2) .. node[very near start,left]{\tiny $x$}(7,0);
%\draw (1,0)..controls (1,2.2) and (6,2.2) .. node[above]{\tiny $u$}(6,0);
%\draw (2,0)..controls (2,1.2) and (5,1.2) .. node[above]{\tiny $y$}(5,0);
%\draw (3,0)..controls (3,.2) and (4,.2) .. node[above]{\tiny $v$}(4,0);
\end{tikzpicture}
\end{equation}
Using our conventions, we obtain a presentation of the associated $\Delta$-groupoid $G$ given by four generators $x,y,u,v$ and four relations:
\begin{subequations}\label{E:defrel}
\begin{align}\label{E:defrel1}
u&=xy,\\ \label{E:defrel2}v&=x*y,\\ \label{E:defrel3}y&=vu,\\ \label{E:defrel4}x&=v*u.
\end{align}
\end{subequations}
Excluding generators $u$ and $v$ by using equations~\eqref{E:defrel1} and \eqref{E:defrel2}, we obtain an equivalent presentation with two generators $x,y$ and two relations:
\begin{subequations}\label{E:defrelred}
\begin{align}\label{E:defrelred1}
y&=(x*y)xy,\\ \label{E:defrelred2}x&=(x*y)*(xy).
\end{align}
\end{subequations}
Equation~\eqref{E:defrelred1} implies that
\[
i(x)=x*y=j(k(x)j(y))\Leftrightarrow ji(x)=k(x)j(y)\Leftrightarrow y=j((ji(x))^2).
\]
Combining this with \eqref{E:defrelred2} we obtain
\[
x=j(k(x*y)j(xy))=j(ki(x)j(x)i(x))=ji(x).
\]
Thus, all the relations~\eqref{E:defrel} of the presentation of our $\Delta$-groupoid reduce to the relations
\[
i(x)=j(x)=k(x)=v,\quad y=j(x^2),\quad u=j(x^{-2}).
\]
\subsubsection{The $A'$-ring}
\begin{theorem}
The $A'$-ring associated to ideal triangulation~\eqref{P:trefoil} is isomorphic to the ring
\(
\mathbb{Z}[t,3^{-1}]/(\Delta_{3_1}(t)),
\)
where $\Delta_{3_1}(t)=t^2-t+1$ is the Alexander polynomial of the trefoil knot.
\end{theorem}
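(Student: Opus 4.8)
The plan is to identify $A'G$ with the ring $R:=\mathbb{Z}[t,3^{-1}]/(\Delta_{3_1}(t))$ by exhibiting a surjective ring homomorphism $\pi\colon R\to A'G$, $t\mapsto w_x$, and then a one-sided inverse obtained from the adjunction $A'\dashv A$, where $A\colon\myR\to\myK$ is the functor $R\mapsto AR$ of Example~\ref{myex:1}.

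First I would read off the structure of $A'G$ near the generator $x$. Writing $t=w_x$, the relation $w_h+w_{k(h)}=1$ for $h=x$ together with $k(x)=x^{-1}$ (from the reduction of the presentation \eqref{E:defrel}) gives $t+t^{-1}=1$, i.e. $t^2-t+1=\Delta_{3_1}(t)=0$; hence $t^{-1}=1-t$, $t^2=t-1$, $t^3=-1$, $t^{-2}=-t$ and $(1+t)^2=3t$ in $A'G$. Applying the same relation to $h=y$, and using $k(y)=u=xy$ (which follows from $y=j(x^2)$, $u=j(x^{-2})$, the identity $k=jij$ of \eqref{E:1}, and \eqref{E:defrel1}), gives $(1+w_x)w_y=1$; thus $1+w_x$ is a unit of $A'G$, and from $(1+w_x)^2=3w_x$ we get $3=w_x^{-1}(1+w_x)^2$, a product of units, so $3$ is invertible in $A'G$. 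Therefore $\pi\colon R\to A'G$, $t\mapsto w_x$, is well defined, and it is surjective: since $H$ generates $G$ and $i(H)=H$, the ring $A'G$ is generated by $\{w_h\mid h\in H\}$, and for every $h\in H$ one computes $w_h\in\mathbb{Z}[w_x,3^{-1}]$ — for instance $w_{x^{-1}}=1-w_x$, $w_{x^2}=w_x-1$, $w_{x^{-2}}=-w_x$, $w_y=(1+w_x)^{-1}=3^{-1}(2-w_x)$, $w_{xy}=w_xw_y$, and $w_{j(y^{-1})}=w_{k(x^2)}=1-w_x^2=2-w_x$.

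To invert $\pi$ I would use the adjunction $A'\dashv A$: a ring homomorphism $\psi\colon A'G\to R$ is the same datum as a $\Delta$-groupoid morphism $\phi\colon G\to AR$, related by $\psi(w_g)=\phi(g)$. It is thus enough to build a $\phi$ with $\phi(x)=t$; then $\psi\circ\pi\colon R\to R$ is a ring endomorphism fixing $t$, hence equals $\mathrm{id}_R$, so $\pi$ is injective, hence an isomorphism. Recall that in $AR$ one has $k(a)=1-a$ and $j(a)=(1-a^{-1})^{-1}$. Since $(1+t)(2-t)=3$ is a unit of $R$, so is $1+t$; consequently each of
\[
\phi(x)=t,\qquad \phi(y)=(1+t)^{-1},\qquad \phi(u)=t(1+t)^{-1},\qquad \phi(v)=t^{-1}
\]
lies in $H_R=(1-R^*)\cap R^*$ (each of these, and each of $1-\phi(x),\ldots,1-\phi(v)$, is a product of units). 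Defining $\phi$ on the generators of \eqref{E:defrel} by these formulas, relations \eqref{E:defrel1} and \eqref{E:defrel3} hold by inspection, and \eqref{E:defrel2}, \eqref{E:defrel4} reduce, after substituting $k(a)=1-a$, $j(a)=(1-a^{-1})^{-1}$ and using $t^2=t-1$, $t^{-2}=-t$, to the identities $j(k(t)\,j(\phi(y)))=t^{-1}=\phi(v)$ and $j(k(\phi(v))\,j(\phi(u)))=t=\phi(x)$ in $R$. Since \eqref{E:defrel1}--\eqref{E:defrel4} is the full presentation of $G$, this determines the required morphism $\phi$, and the argument closes.

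The only genuinely computational step is the verification of \eqref{E:defrel2} and \eqref{E:defrel4} in $AR$; this is routine once the formulas for $j$ and $k$ and the numerical identities $t^2=t-1$, $(1+t)^2=3t$ are in hand. The conceptual point explaining the appearance of $3^{-1}$ is the factorization $3=w_x^{-1}(1+w_x)^2=(1+w_x)(2-w_x)$ in $\mathbb{Z}[w_x]/(\Delta_{3_1}(w_x))$: the relation at $y$ forces $1+w_x$ to be invertible, and over this quotient ring inverting $1+w_x$ is equivalent to inverting $3$.
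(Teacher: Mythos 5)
Your proof is correct, and the isomorphism you produce is the same one as in the paper ($w_x\leftrightarrow t$, $w_y\leftrightarrow(1+t)^{-1}=3^{-1}(2-t)$); the difference lies in how each direction is justified. The paper writes down the explicit ring presentation $A'G=\mathbb{Z}\langle w_x^{\pm1},w_y^{\pm1}\,\vert\, w_x^{-1}=1-w_x,\ w_y^{-1}=1-w_x^{-2}\rangle$ and verifies directly that $w_x\mapsto t$, $w_y\mapsto 3^{-1}(2-t)$ respects these two relations, recording the inverse ($t\mapsto w_x$, $3^{-1}\mapsto w_xw_y^2$) as ``easily verified''. You instead obtain the homomorphism $A'G\to\mathbb{Z}[t,3^{-1}]/(\Delta_{3_1}(t))$ from the adjunction $A'\dashv A$, by checking the four $\Delta$-groupoid relations~\eqref{E:defrel} inside $A\bigl(\mathbb{Z}[t,3^{-1}]/(\Delta_{3_1}(t))\bigr)$ with $j(a)=(1-a^{-1})^{-1}$ --- a computation equivalent to, though packaged differently from, the paper's two ring identities --- and you make explicit what the paper leaves implicit in the other direction, namely that $t+t^{-1}=1$ and $(1+w_x)w_y=1$ force $3=w_x^{-1}(1+w_x)^2$ to be a unit of $A'G$, so that $t\mapsto w_x$ is genuinely well defined on the localization. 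The adjunction route buys a clean injectivity argument ($\psi\circ\pi$ fixes the ring generators $t$ and $3^{-1}$, hence is the identity) and a conceptual explanation of why $3$ must be inverted, at the cost of invoking the universal property of the $\Delta$-groupoid presentation; the paper's route is more elementary but leaves that invertibility, which is the real content of the $3^{-1}$ in the statement, unremarked. Both arguments are complete modulo routine verification.
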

\begin{proof}
We have the following presentation
\[
A'G=\mathbb{Z}\langle w_x^{\pm1},w_y^{\pm1}\vert\ w_x^{-1}=1-w_x,w_y^{-1}=1-w_x^{-2}\rangle.
\]
The maps
\[
\phi(w_x)=t,\ \phi(w_y)=3^{-1}(2-t),
\]
specify a well defined ring homomorphism $\phi\colon A'G\to\mathbb{Z}[t,3^{-1}]/(\Delta_{3_1}(t))$. Indeed, we have
\[
\phi(w_x^{-1})=\phi(w_x)^{-1}=t^{-1}=1-t=1-\phi(w_x)=\phi(1-w_x),
\]
 and
 \begin{multline*}
 \phi(w_y^{-1})=\phi(w_y)^{-1}=3(2-t)^{-1}=3(1+t^{-1})^{-1}\\=3((1+t^{-1})(1+t))^{-1}(1+t)=
 1+t=1-t^{-2}=1-\phi(w_x)^{-2}=\phi(1-w_x^{-2}).
 \end{multline*}
 On the other hand, it is easily verified that the inverse of $\phi$ has the form:
 \[
 \phi^{-1}(t)=w_x,\ \phi^{-1}(3^{-1})=w_xw_y^2.
 \]
\end{proof}
\subsubsection{The $B'$-ring}
\begin{theorem}
The $B'$-ring associated to ideal triangulation~\eqref{P:trefoil} is isomorphic to its $A'$-ring.
\end{theorem}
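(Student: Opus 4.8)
The plan is to compute $B'G$ outright and recognise it as the ring produced in the previous theorem, so that the canonical map $\alpha_G\colon B'G\to A'G$ of Remark~\ref{R:nat-tr} becomes an isomorphism. I will use that $B'$ is left adjoint to the functor $R\mapsto BR$ of Example~\ref{myex:2}: thus $B'G$ is the universal unital ring $R$ equipped with a $\Delta$-groupoid morphism $f\colon G\to BR$, and on $H\subset G$ the universal such morphism is $g\mapsto(v_g,u_g)\in R^*\times R^*=H_{BR}$. As a preliminary I record the $\Delta$-structure of $BR$ needed below: since $k(p,q)=(q,p)$ on $H_{BR}$, $i$ is inversion, and $j=iki$, a short computation in $R\rtimes R^*$ gives $(p,q)^{-1}=(-q^{-1}p,q^{-1})$ and $j(p,q)=(p^{-1},-p^{-1}q)$.

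Next I would feed in the reduction of the relations \eqref{E:defrel} carried out above: as a $\Delta$-group, $G$ is generated by $x$ subject to $j(x)=x^{-1}$ and to the requirement $x^2\in H$ (the latter makes $y:=j(x^2)$ and $u:=j(x^{-2})$ members of $H$ and forces all of \eqref{E:defrel}). Hence a morphism $f\colon G\to BR$ is the same datum as an element $f(x)=(a,b)\in R^*\times R^*$ satisfying (i) $j(f(x))=f(x)^{-1}$ and (ii) $f(x)^2\in R^*\times R^*$. Plugging in the formulas, (i) reads $(a^{-1},-a^{-1}b)=(-b^{-1}a,b^{-1})$, i.e. $b=-a^2$ and $a^3=-1$, so that $f(x)=(a,a^{-1})$ with $a^3=-1$; and, since $f(x)^2=(a+1,-a)$, condition (ii) reads $a+1\in R^*$. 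One checks conversely that, once $a\in R^*$, $a+1\in R^*$ and $a^3=-1$, the $f$-images of all elements of $H=\{x^{\pm1},x^{\pm2},y^{\pm1},u^{\pm1}\}$ do lie in $R^*\times R^*$, so no further constraint arises. Thus $B'G$ is the universal unital ring containing an element $a$ with $a,a+1\in R^*$ and $a^3+1=0$.

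It remains to identify this universal ring. From $a^3+1=(a+1)(a^2-a+1)=0$ and the invertibility of $a+1$ we get $\Delta_{3_1}(a)=a^2-a+1=0$; then $a^{-1}=1-a$, so $2-a=a^{-1}(a+1)$ is a unit and $3=(2-a)(a+1)$ is a unit. Conversely, in $\mathbb{Z}[t,3^{-1}]/(\Delta_{3_1}(t))$ one has $t^3=-1$ and both $t$ and $t+1$ are units (with $(t+1)^{-1}=3^{-1}(2-t)$). These two presentations are inverse to one another under $a\leftrightarrow t$, whence $B'G\cong\mathbb{Z}[t,3^{-1}]/(\Delta_{3_1}(t))$, which by the previous theorem is exactly $A'G$. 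Tracking the universal morphism shows $a$ corresponds to $v_x$ and $u_x$ to $v_x^{-1}=1-v_x$, so under these identifications $\alpha_G$ (which sends $u_x\mapsto w_x$) becomes the automorphism $t\mapsto 1-t$ of $\mathbb{Z}[t,3^{-1}]/(\Delta_{3_1}(t))$; in particular $\alpha_G\colon B'G\to A'G$ is an isomorphism.

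The step I expect to be the real point is the assertion in the second paragraph that $j(x)=x^{-1}$ together with $x^2\in H$ exhaust the relations of $G$ — equivalently, that once $a,a+1\in R^*$ and $a^3=-1$ the relations $v=x*y$, $y=vu$, $x=v*u$ of \eqref{E:defrel} and axiom~(iv) for every $H$-composable pair hold automatically in $BR$; this is a finite computation with the explicit product on $R\rtimes R^*$, and it is exactly what prevents $B'G$ from being a proper quotient of $\mathbb{Z}[t,3^{-1}]/(\Delta_{3_1}(t))$. Related to this is the observation that the invertibility of $a+1$ (not merely $a^3=-1$) is what both upgrades $a^3+1=0$ to $\Delta_{3_1}(a)=0$ and makes $3$ a unit. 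If one prefers to bypass the adjunction, the same facts can be read off the presentation of $B'G$ by the generators $u_g,v_g$: the relations $u_{k(x)}=v_x$, $v_{k(x)}=u_x$ and the cocycle identity give $v_x=u_x^{-1}$ and $u_x^3=-1$; the $k$-relation for $x^2\in H$ forces $u_x+1$ to be invertible; hence $\Delta_{3_1}(u_x)=0$ and $3$ is a unit; and every $u_g,v_g$ turns out to be a polynomial in $u_x$ and $(u_x+1)^{-1}$, so that $B'G$ is generated over $\mathbb{Z}[3^{-1}]$ by $u_x$ — which, combined with the previous theorem exhibiting $\mathbb{Z}[t,3^{-1}]/(\Delta_{3_1}(t))$ as a quotient of $B'G$, finishes the argument.
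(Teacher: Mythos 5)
Your argument is correct, and it reaches the paper's conclusion by a genuinely different (though closely parallel) route. The paper does not compute $B'G$ from scratch: it invokes Remark~\ref{R:nat-tr} (so that it suffices to show $u_h+v_h=1$ for $h\in H$), writes the four-relation presentation of $B'G$ in the generators $u_x,v_x,u_y,v_y$ coming from the reduced relations $i(x)=j(x)=k(x)$, $y=j(x^2)$, $u=j(x^{-2})$, and checks directly that $u_x+v_x=1$ and $u_y+v_y=1$ hold there, whence $I_G=0$. You instead exploit the adjunction with the functor $B$ of Example~\ref{myex:2}, reduce a $\Delta$-groupoid morphism $G\to BR$ to the datum of a unit $a$ with $a+1$ invertible and $a^3=-1$, and identify the universal such ring as $\mathbb{Z}[t,3^{-1}]/(\Delta_{3_1}(t))$. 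The pivot is identical in both proofs: $t^3+1=(t+1)(t^2-t+1)=0$ together with the invertibility of $t+1$ (forced in the paper by the invertibility of $v_y$, and in your argument by the requirement $x^2\in H$) gives $\Delta_{3_1}(t)=0$ and hence the invertibility of $3=(2-t)(t+1)$. Your route buys an explicit identification of $B'G$ itself and of $\alpha_G$ as the automorphism $t\mapsto 1-t$, rather than only the vanishing of $I_G$; its cost is the step you rightly flag as the real point, namely that the reduction of \eqref{E:defrel} to ``$j(x)=x^{-1}$ and $x^2\in H$'' introduces no further constraints --- a verification the paper sidesteps by manipulating the presentation of $B'G$ directly. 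Your closing alternative, which re-derives $v_x=u_x^{-1}$, $u_x^3=-1$ and the invertibility of $u_x+1$ from the $u_g,v_g$ relations and then cites the $A'$ computation, is essentially the paper's own proof.
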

\begin{proof}
Due to Remark~\ref{R:nat-tr}, it is enough to show that $u_x+v_x=1$, $x\in H$, in the presentation of the $B'$-ring. Indeed, we have the presentation
\begin{multline*}
B'G=\mathbb{Z}\langle u_x^{\pm1}, v_x^{\pm1},u_y^{\pm1}, v_y^{\pm1}\vert\\ u_x^{-1}=v_x,\ -u_x^{-1}v_x=u_x,\ u_x^2=-v_y^{-1}u_y,\ (u_x+1)v_x=v_y^{-1}\rangle
\end{multline*}
Denoting $u_x=t$, from the first, forth  and third relations we have
\[
v_x=t^{-1},\ v_y=(1+t^{-1})^{-1},\ u_y=-(1+t^{-1})^{-1}t^2,
\]
while the second relation implies that
\[
t^3+1=(t+1)(t^2-t+1)=0.
\]
Taking into account invertibility of $v_y$, we conclude that $t^2-t+1=0$. Thus,
\[
u_x+v_x=t+t^{-1}=1,\quad u_y+v_y=(1+t^{-1})^{-1}(t^{-1}+1)=1.
\]
\end{proof}
\subsection{The figure-eight knot}
The standard ideal triangulation of the complement of the figure-eight knot \cite{Thurston} also consists of two tetrahedra and, in our notation, is described by the following diagram
\begin{equation}\label{E:8graph}
\begin{tikzpicture}[baseline=5pt,scale=.7]
%\node (x) {}(1.5,1);
\draw[very thick] (0,0)--(3,0);\draw[very thick] (0,1)--(3,1);
%\draw[very thick] (4,0)--(7,0);
\draw[thin] (0,0)--node[right,near end]{\tiny $x$}(1,1);
\draw (1,0)--node[right,near start]{\tiny $u$}(0,1);
\draw (2,0)--node[left,near start]{\tiny $y$}(3,1);
\draw (3,0)--node[left,near end]{\tiny $v$}(2,1);
%\draw (0,0)..controls (0,3.2) and (7,3.2) .. node[very near start,left]{\tiny $x$}(7,0);
%\draw (1,0)..controls (1,2.2) and (6,2.2) .. node[above]{\tiny $u$}(6,0);
%\draw (2,0)..controls (2,1.2) and (5,1.2) .. node[above]{\tiny $y$}(5,0);
%\draw (3,0)..controls (3,.2) and (4,.2) .. node[above]{\tiny $v$}(4,0);
\end{tikzpicture}
\end{equation}
The corresponding presentation of the associated $\Delta$-groupoid is given by four generators $x,y,u,v$ and four relations:
\begin{subequations}\label{E:8defrel}
\begin{align}\label{E:8defrel1}
u&=xy,\\ \label{E:8defrel2}v&=x*y,\\ \label{E:8defrel3}v&=yx,\\ \label{E:8defrel4}u&=y*x.
\end{align}
\end{subequations}
which, by excluding the variables $u$ and $v$, reduces to a presentation with two generators $x$ and $y$ and two relations:
\begin{subequations}\label{E:8defrelred}
\begin{align}\label{E:8defrelred1}
xy&=y*x\Leftrightarrow j(xy)=k(y)j(x),\\ \label{E:xydefrelred2}yx&=x*y\Leftrightarrow j(yx)=k(x)j(y).
\end{align}
\end{subequations}
To analyze the associated rings of this $\Delta$-groupoid, we shall find it useful to use a class of rings which contains the rings of two-by-two matrices over commutative rings.
\subsubsection{Formal $M_2$-rings}
A ring $R$ with center $Z(R)$ is called \emph{formal $M_2$-ring}  if there exists a homomorphism of Abelian groups $L\colon R\to Z(R)$, called \emph{trace function}, such that
\[
L(1)=2,\quad
L(x)x-x^2\in Z(R),\quad \forall x\in R.
\]
 Thus, a formal $M_2$-ring is a ring where any element satisfies a quadratic equation over its center. For a given formal $M_2$-ring the trace function $L$ is not necessarily unique. Indeed, for any Abelian group homomorphism $M\colon R\to Z(R)$ satisfying the conditions $M(x)x\in Z(R)$ for all $x\in R$, and $M(1)=0$, the function $L+M$ is also a trace function.

  In a formal $M_2$-ring $R$, the map $Q\colon R\to Z(R)$, defined by the formula $Q(x)=L(x)x-x^2$ is a quadratic form on the $\mathbb{Z}$-module $R$ such that $Q(1)=1$, and one has the identity
\begin{equation}\label{E:anti-com}
xy+yx=-(x,y)+L(x)y+L(y)x,\quad \forall x,y\in R,
\end{equation}
where
\begin{equation}\label{E:bilin}
(x,y)=Q(x+y)-Q(x)-Q(y)
\end{equation}
is (twice) the symmetric $\mathbb{Z}$-bilinear form associated with $Q$. In particular, specifying $y=1$ in \eqref{E:anti-com}, we obtain the equality
\[
L(x)=(x,1),\quad \forall x\in R.
\]

A formal $M_2$-ring $R$ with a trace function $L$ is called \emph{symmetric} if $L\circ L=2L$ and the following identity is satisfied:
\[
(x,y)=L(x)L(y)-L(xy),\quad \forall x,y\in R.
\]
 In a symmetric formal $M_2$-ring $R$ one has the additional properties:
 \[
L\circ Q=2Q,\quad  L(xy)=L(yx),\quad Q(xy)=Q(x)Q(y),\quad \forall x,y\in R.
 \]
 A basic example of a symmetric formal $M_2$-ring is the ring $M_2(K)$ of two-by-two matrices over a commutative ring $K$, where the trace function is given by the usual trace: $L(x)=\mathrm{tr}(x)$.

 \begin{lemma}\label{L:cent}
 Let a ring $R$ be generated over its center $Z(R)$ by two elements $a$ and $b$. Then, the condition
 \[
 ab+ba\in Z(R)+Z(R)a+Z(R)b
 \]
 is satisfied if and only if the conditions
 \[
 a^2\in Z(R)+Z(R)a,\quad b^2\in Z(R)+Z(R)b
 \]
 are satisfied.
 \end{lemma}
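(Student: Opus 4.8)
The plan is to prove both implications directly by expressing the relevant products in the three-term form $Z(R)+Z(R)a+Z(R)b$ and tracking how the generating set $\{1,a,b\}$ over $Z(R)$ interacts with multiplication. Write $Z=Z(R)$ and set $V=Z+Za+Zb$, a sub-$Z$-module of $R$; the hypothesis in either direction is a statement about which products of generators land in $V$.

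\emph{The easy direction} is $(\Leftarrow)$. Assume $a^2\in Z+Za$ and $b^2\in Z+Zb$. Then $V$ is closed under left and right multiplication by $a$: indeed $a\cdot a=a^2\in V$, $a\cdot 1=a\in V$, and $a\cdot b$ together with $b\cdot a$ is what we want to control, so I first observe $aV\subseteq Za^2+Za+Zab\subseteq V+Zab$ and symmetrically $Va\subseteq V+Zba$. Rather than chase this, the cleanest route is: since $R$ is generated over $Z$ by $a,b$, every element of $R$ is a $Z$-linear combination of words in $a,b$; using $a^2,b^2\in V$ repeatedly, any such word reduces to a $Z$-combination of the alternating words $1,a,b,ab,ba,aba,bab,\dots$. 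Hence $R=V+Z\,ab+Z\,ba+(\text{longer alternating words})$. But $R$ is a ring and $ab+ba$ is a specific element of it; to pin it into $V$ I use that $ab+ba$ is central is \emph{not} assumed — instead I compute $(ab+ba)$ modulo $V$ using the quadratic relations directly: from $b^2\in Z+Zb$, write $b^2=\lambda+\mu b$ with $\lambda,\mu\in Z$; then $ab\cdot b=a(\lambda+\mu b)=\lambda a+\mu ab$, and similarly $b\cdot ba=\lambda a+\mu ba$, which shows the $Z$-span of $\{1,a,b,ab,ba\}$ is already multiplicatively closed once $a^2,b^2\in V$, so it equals $R$; finally, multiplying the element $a$ by $b$ on each side and adding, $ab+ba$ lies in this span, and the coefficient-matching forced by centrality of $Z$ (applied to commuting $ab+ba$ past central elements) removes the $ab$ and $ba$ terms, leaving $ab+ba\in V$.

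\emph{The converse} $(\Rightarrow)$ is the substantive direction and the main obstacle. Assume $ab+ba=\alpha+\beta a+\gamma b$ with $\alpha,\beta,\gamma\in Z$. I want to deduce $a^2\in Z+Za$. The idea is to exploit that $R$ is \emph{generated over its center}: $b$ can be written as a noncommutative polynomial in $a$ over $Z$, say $b=p(a)$, but more usefully I should use that $a^2$ must lie in the subring generated by $a,b$ over $Z$ in a constrained way. The key trick: from the given relation, $ba=\alpha+\beta a+\gamma b-ab$, so one can always move an $a$ past a $b$ at the cost of introducing lower terms; iterating, every word in $a,b$ rewrites as a $Z$-combination of $a^m b^n$ (powers of $a$ left of powers of $b$). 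Thus $R=\sum_{m,n\ge0} Z\,a^m b^n$. Now consider the element $b^2$: being in $R$, it equals $\sum c_{mn}a^m b^n$. Multiply the relation $ab+ba=\alpha+\beta a+\gamma b$ on the left by $b$ and on the right by $b$ and subtract to get $b^2 a - a b^2 \in Zb + Zb^2\cdot(\text{stuff})$; comparing with the monomial expansion and using that $Z$ is central forces the $b$-degree of $b^2$ to be $\le1$, i.e. $b^2\in Z+Za+Zab+\dots$ with controlled $a$-degree, and a symmetric argument (the setup is symmetric in $a\leftrightarrow b$) gives $a^2\in Z+Zb+\dots$. The real work is showing the cross-terms ($ab$, $a^2b$, etc.) cannot appear: here I expect to need a minimality/degree argument — take the expression of $a^2$ as $\sum Z a^m b^n$ of minimal total degree and derive a contradiction from a nonzero top term by multiplying by $b$ and re-reducing, using the relation to lower degree, which will be the delicate bookkeeping step.

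I would structure the final proof as: (1) fix notation $Z$, $V$; (2) prove the rewriting lemma that $R=\sum_{m,n}Za^mb^n$ under \emph{either} hypothesis; (3) deduce $(\Leftarrow)$ quickly as above; (4) for $(\Rightarrow)$, run the degree-reduction argument symmetrically in $a$ and $b$. The anticipated hard part is step (4)'s degree argument — making precise why the quadratic relation on the \emph{pair} forces a quadratic relation on \emph{each} generator, rather than merely bounding mixed degrees — and I would watch carefully that nothing uses commutativity of $a$ with $b$, only centrality of $Z$.
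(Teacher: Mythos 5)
There is a genuine gap --- in fact one in each direction --- and the root cause is that the proposal never uses the one fact that makes the lemma work: since $R$ is generated over $Z(R)$ by $a$ and $b$, an element of $R$ is central \emph{if and only if} it commutes with $a$ and with $b$. The paper's proof is a short commutator computation built on this. Setting $z=ya+xb-ab-ba$, $p=xa-a^2$, $q=yb-b^2$ for $x,y\in Z(R)$, one verifies the identities $[a,z]=[p,b]$ and $[b,z]=[q,a]$ (the point being $[a,\,ab+ba]=[a^2,b]$), and since $[a,p]=[b,q]=0$ automatically, $z$ is central precisely when $p$ and $q$ are; both implications of the lemma drop out simultaneously. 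You never compute a commutator and never invoke the criterion ``commutes with the generators $\Rightarrow$ central,'' so neither implication is actually established by the proposal.

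Concretely: in your $(\Leftarrow)$ argument, the assertion that the $Z$-span of $\{1,a,b,ab,ba\}$ is multiplicatively closed once $a^2,b^2\in V$ does not follow from the computations you give --- they only reduce $ab\cdot b$ and $b\cdot ba$, while $aba$ and $bab$ (which you yourself list two sentences earlier as irreducible alternating words) cannot be reduced without already knowing $ab+ba\in Z+Za+Zb$, i.e.\ the statement being proved; the closedness is a \emph{consequence} of the lemma, not an input to it. Even granting it, the closing step (``coefficient-matching forced by centrality of $Z$ \dots removes the $ab$ and $ba$ terms'') is vacuous: central elements commute with everything, so commuting $ab+ba$ past them yields $0=0$, and in an arbitrary ring the monomials $1,a,b,ab,ba$ need not be $Z(R)$-independent, so there are no coefficients to match. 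The same issue is fatal to your $(\Rightarrow)$ direction: $R$ is an arbitrary ring, the monomials $a^mb^n$ carry no linear-independence or normal-form structure, so ``the $b$-degree of $b^2$'' is not a well-defined quantity and the degree-reduction scheme has nothing to induct on; the step you flag as the delicate one --- why a quadratic relation on the pair forces one on each generator --- is precisely the entire content of the lemma and is not supplied. I would abandon the normal-form strategy and instead prove the identity $[a,\,ab+ba]=[a^2,b]$ together with its mirror image under $a\leftrightarrow b$; everything else is bookkeeping.
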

 \begin{proof}
 For some $x,y\in Z(R)$, consider the elements
 \[
 z=ya+xb-ab-ba,\ p=xa-a^2,\ q=yb-b^2.
 \]
 Denoting $[u,v]_\pm=uv\pm vu$ for any $u,v\in R$, we have
\begin{multline*}
[a,z]_-=x[a,b]_--[a,[a,b]_+]_-=x[a,b]_--[a,[a,b]_-]_+\\=x[a,b]_--[a^2,b]_-
=[xa-a^2,b]_-=[p,b]_-
\end{multline*}
and, by a similar calculation, $[b,z]_-=[q,a]_-$. Thus, taking into account the evident equalities $[a,p]_-=[b,q]_-=0$, we conclude that $z$ is central if and only if $p$ and $q$ are central.
 \end{proof}
 \begin{lemma} \label{L:for-mat}
Let $x,y,p,q$ be some elements in a commutative ring $K$. Then the ring $R$, defined by the presentation
\begin{equation}\label{E:comp-ring}
R(K,x,y,p,q)=K\langle a,b\vert\ a^2=xa-p,\ b^2=yb-q\rangle,
\end{equation}
is a symmetric formal $M_2$-ring with the trace function defined by the formulae
\begin{equation}\label{E:tr-fun}
L(a)=x,\quad L(b)=y.
\end{equation}
\end{lemma}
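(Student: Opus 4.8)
The plan is to exhibit an explicit trace function on $R = R(K,x,y,p,q)$ and verify the two defining conditions of a symmetric formal $M_2$-ring directly on generators, then extend by additivity and multiplicativity. First I would establish that the elements $1, a, b, ab$ span $R$ as a $K$-module: the relations $a^2 = xa - p$ and $b^2 = yb - q$ let one reduce any word in $a,b$, and the only potentially new product is $ba$, which I handle via Lemma~\ref{L:cent}. Indeed, since $a^2 \in K + Ka$ and $b^2 \in K + Kb$ by the defining relations, Lemma~\ref{L:cent} (applied with $Z(R) \supseteq K$, noting $a,b$ generate $R$ over $K$) gives $ab + ba \in K + Ka + Kb$; combined with the fact that $ab$ is already in the spanning set, this shows $ba$ lies in the $K$-span of $1,a,b,ab$ as well. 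So $R = K\cdot 1 + K\cdot a + K\cdot b + K\cdot ab$, and $Z(R) \supseteq K$.

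Next I would define $L\colon R \to K$ on this spanning set by
\[
L(1) = 2,\quad L(a) = x,\quad L(b) = y,\quad L(ab) = xy - (a,b),
\]
where $(a,b)$ is forced by consistency; concretely one should first compute $ab+ba$ explicitly as a $K$-combination of $1,a,b$ from the Lemma~\ref{L:cent} argument (tracking the constants $x,y,p,q$), read off $(a,b)$ from the symmetric bilinear form, and then check this $L$ is well-defined, i.e.\ respects all $K$-linear relations among $1,a,b,ab$ — this reduces to checking it is compatible with the reductions $a^2 = xa - p$ and $b^2 = yb - q$, which is where $L(a^2) = L(a)^2 - 2Q(a)$ type identities must be matched against $L(xa - p) = x^2 - 2p$.

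Then I would verify the formal $M_2$ axiom $L(z)z - z^2 \in Z(R)$ for all $z \in R$. By $\mathbb{Z}$-linearity of $L$ and bilinearity it suffices to check it on the generators and on mixed products: for $z = a$ one gets $L(a)a - a^2 = xa - (xa - p) = p \in K$; similarly $p \mapsto q$ for $z=b$; for $z = a+b$ and $z = ab$ one expands using \eqref{E:anti-com} and the explicit form of $ab+ba$. Symmetry ($L\circ L = 2L$ and $(x,y) = L(x)L(y) - L(xy)$) is then checked on generators in the same fashion, using that $L$ lands in $K \subseteq Z(R)$ where $L$ acts as multiplication by $2$ on the relevant elements.

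The main obstacle I anticipate is the well-definedness of $L$: because $R$ is given by a presentation, one must be sure that $1,a,b,ab$ is genuinely a basis (or at least that the proposed $L$ descends through all relations), rather than there being a hidden $K$-linear dependency that $L$ fails to respect. The cleanest route is probably to build $R$ concretely — either realize it inside $M_2$ of a suitable commutative extension of $K$ via explicit $2\times 2$ matrices for $a$ and $b$ with prescribed trace and determinant (so that symmetry comes for free from the matrix example already cited in the text), or show the presentation has $\{1,a,b,ab\}$ as a free $K$-basis by a normal-form/rewriting argument. Once faithfulness of such a model is in hand, all the identities are routine bilinear computations.
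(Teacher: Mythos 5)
There is a genuine gap, and it sits exactly where you lean on Lemma~\ref{L:cent}: that lemma only tells you that the element $z=ya+xb-ab-ba$ is \emph{central}, not that it lies in $K$. So the correct conclusion is $ab+ba\in Z(R)+Ka+Kb$, not $ab+ba\in K+Ka+Kb$, and your claim that $1,a,b,ab$ span $R$ as a $K$-module is false. In fact $z$ is a genuinely new free central parameter: $R(K,x,y,p,q)$ is the coproduct over $K$ of $K[a]/(a^2-xa+p)$ and $K[b]/(b^2-yb+q)$, which is free of rank $4$ over the polynomial ring $K[z]$ and hence of infinite rank over $K$ (the words $abab\cdots$ are not $K$-combinations of $1,a,b,ab$). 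Your own formula $L(ab)=xy-(a,b)$ already betrays this: one computes $(a,b)=z\notin K$, so the proposed $L$ does not land in $K$. Consequently both fallback routes you sketch fail as stated: there is no faithful normal form with $\{1,a,b,ab\}$ as a free $K$-basis, and a realization inside $M_2$ of a commutative extension of $K$ would have to carry the extra parameter $z$ and still requires a faithfulness argument equivalent to the normal-form problem.

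The paper's proof repairs precisely these points. It takes $z$ central via Lemma~\ref{L:cent}, proves that $1,a,b,ab$ form a basis of $R$ over $K[z]$ (reduction via $ba=-ab+ya+xb-z$ plus the two quadratic relations), and settles the uniqueness of the reduced form --- the well-definedness obstacle you correctly flagged --- by exhibiting an explicit faithful representation into $M_4(K[z])$, not $M_2$. The trace function is then defined by $L(a)=x$, $L(b)=y$, $L(1)=2$, $L(ab)=xy-z$ and extended \emph{$K[z]$-linearly}; the symmetry axioms require the additional verifications $L(u)=2u$ for $u\in\{x,y,p,q,z\}$, with $L(z)=2z$ coming from $(a,b)=z$. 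Your verification plan for the axioms on generators is essentially right once the module structure over $K[z]$ (rather than $K$) is in place, but without that correction the definition of $L$ is not even well posed.
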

\begin{proof}
By Lemma~\ref{L:cent}, the element
\[
z=ya+xb-ab-ba
\]
 is central.
Any element of $R(K,x,y,p,q)$ is a linear combination of the elements $1,a,b,ab$ with coefficients in the polynomial ring $K[z]$. Indeed, by using the relation $ba=-ab+ya+xb-z$, any word in letters $a$ and $b$ can be written as a finite sum of the form $\sum_{m,n\in\mathbb{Z}_{\ge0}}r_{m,n}a^mb^n$ with $r_{m,n}\in K[z]$. Due to the relation $a^2=xa-p$, any power $a^m$ is a linear combination of $1$ and $a$ with coefficients in $K$. Similarly, due to the relation $b^2=yb-q$, any power $b^n$ is a linear combination of $1$ and $b$ with coefficients in $K$. Thus, any word in $a$ and $b$ is reduced to a unique linear combination of the words $1,a,b,ab$ with coefficients in $K[z]$. Moreover, by considering a representation into the matrix ring $M_4(K[z])$ of the form
\[
a\mapsto\begin{pmatrix}
0&-p&0&0\\
1&x&0&0\\
0&0&0&-p\\
0&0&1&x
\end{pmatrix},\quad
b\mapsto\begin{pmatrix}
0&-z&-q&-qx\\
0&y&0&q\\
1&x&y&xy-z\\
0&-1&0&0
\end{pmatrix},
\]
it can be verified that the reduced form of any element of $R(K,x,y,p,q)$ is unique.

It can be now straightforwardly verified that $R$ is a symmetric formal $M_2$-ring with the trace function $L$ defined by the formulae~\eqref{E:tr-fun} and extended linearly over $K[z]$ to the whole ring $R(K,x,y,p,q)$. Indeed, we have
\[
L(x)=L(L(a))=2L(a)=2x,\quad L(p)=L(Q(a))=2Q(a)=2p,
\]
and similarly for $y$ and $q$. Then, as
\[
(a,b)=(x+y)(a+b)-(a+b)^2-p-q=xb+ya-ab-ba=z,
\]
we also have $L(z)=2z$. Thus, for any $u\in\mathbb{Z}[x,y,z,p,q]$, we have $L(u)=2u$.

\end{proof}
\subsubsection{Ideals in formal $M_2$-rings}
Let $I$ be a two-sided ideal in a  formal $M_2$-ring $R$ with a trace function $L\colon R\to Z(R)$. Then, evidently,
\(
Q(I)\subset I
\)
but not necessarily $L(I)\subset I$. However, from the identity
\[
xL(y)-(x,y)\in I,\quad x\in R,\ y\in I,
\]
it follows that $RL(I)\subset I+Z(R)$. Moreover, in a symmetric formal $M_2$-ring the relation
\begin{equation}\label{E:id-incl}
xL(y)-L(x)L(y)+L(xy)=xL(y)+L(xy-L(x)y)\in I,\quad x\in R,\ y\in I,
\end{equation}
implies that $RL(I)\subset L(I)+I$.
Thus, the set $L(I)+I$ is a two-sided ideal in $R$ and its image in the quotient ring $R/I$ is an ideal contained in the center $Z(R/I)$. It is clear that the quotient ring $R/(L(I)+I)$ inherits the structure of a formal $M_2$-ring with the trace function induced from that of $R$.
\begin{remark}\label{R:ideal-elements-by-traces}
Equation~\eqref{E:id-incl} implies that if $y\in I$ is such that $L(y)\in I$, then $L(xy)\in I$ for any $x\in R$. This fact can be used for construction of elements of the intersection $I\cap Z(R)$.
\end{remark}
\subsubsection{The $A'$-ring}
The ring $A'G$ has the initial presentation
\begin{multline*}
\mathbb{Z}\langle w_x^{\pm1},w_y^{\pm1},(1-w_x)^{-1},(1-w_y)^{-1},(1-w_xw_y)^{-1}\vert\\ (1-(w_xw_y)^{-1})^{-1}=(1-w_y)(1-w_x^{-1})^{-1},\\ (1-(w_yw_x)^{-1})^{-1}=(1-w_x)(1-w_y^{-1})^{-1}\rangle.
\end{multline*}
which can be shown to be equivalent to the quotient ring $R(\mathbb{Z}[s],s,s,1,1)/I$, where the two-sided ideal $I$ is generated by the elements
\[
(s-1)(a+1)-z,\quad (s-1)(b+1)-z.
 \]
 Identification with the initial presentation is given by the formulae:
 \[
 w_x\mapsto a,\quad w_y\mapsto b.
 \]
The techniques of formal $M_2$-rings lead to an equivalent presentation of the form
\begin{multline}
\mathbb{Z}[s,z]/((s-1)(z-2),s^2-2z-1,z^2-2z)\langle a,g\vert\ \  a^2-a=z-s,\ g^2=z-2,\\
 ag+ga=2-z+g,\
sa=a-s+z+1,\ sg=g,\ za=s-1,\ zg=0\rangle,
\end{multline}
with the identifications
\[
w_x\mapsto a,\quad w_y\mapsto a+g.
\]
The part $J=\mathbb{Z}(s-1)+\mathbb{Z}z$ in this presentation coincides with the image of the ideal $L(I)+I$. The quotient ring $A'G/J$, having a unique structure of a symmetric formal $M_2$-ring, is isomorphic to the ring of Hurwitz quaternions.

\subsubsection{The $B'$-ring}

\begin{theorem}\label{P:b-prime-f8}
The $B'$-ring associated to diagram~\eqref{E:8graph} admits the following presentation
\[
\mathbb{Z}\left\langle a,b,c^{\pm1}\left\vert\ c=a(a+1),\ c^{-1}=b(b+1),\ c=\left(aba^{-1}b^{-1}\right)^2\right.\right\rangle.
\]
\end{theorem}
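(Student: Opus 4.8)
The plan is to follow the same route as the $A'$-ring computations above: turn the presentation of the $\Delta$-groupoid $G$ into a presentation of $B'G$, and then simplify by Tietze transformations until it reaches the stated form.

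First, I would obtain the raw presentation of $B'G$. Since $G$ is generated by $x$ and $y$, and since under the functor $B'$ the symbol $u$ is multiplicative ($u_{gh}=u_gu_h$) while $v$ is a crossed homomorphism ($v_{gh}=u_gv_h+v_g$), the ring $B'G$ is generated by $u_x^{\pm1},v_x^{\pm1},u_y^{\pm1},v_y^{\pm1}$ --- with $v_x,v_y$ invertible because $v_g=u_{k(g)}$ and $k(x),k(y)\in H$. The relations come from inserting the two defining relations \eqref{E:8defrelred1} and \eqref{E:xydefrelred2} of $G$ into this dictionary, using in addition $u_{k(g)}=v_g$, $v_{k(g)}=u_g$ for $g\in H$ and the consequences $u_{j(g)}=-v_g^{-1}u_g$, $v_{j(g)}=v_g^{-1}$ (obtained from $j=iki$ and $v_{g^{-1}}=-u_g^{-1}v_g$). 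Each relation contributes a ``$u$-identity'' and a ``$v$-identity'', so one gets four relations among the four generators. Part of this first step is the bookkeeping verification that these four relations, together with the invertibility statements, genuinely present $B'G$ --- i.e.\ that all the other defining relations of $B'$ (multiplicativity on arbitrary words of $G$, and $u_{k(g)}=v_g$ for every $g\in H$, not merely for the generators) are consequences; here one uses that $j(gh)$ and $k(gh)$ are expressible through $k$ and the operation $x*y$ via the identities displayed just before Definition~\ref{def:delta}, which is what keeps the list of relations finite.

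Second, I would simplify this four-generator, four-relation presentation. I expect the two ``$u$-identities'' to become conjugacy relations, and the two ``$v$-identities'' to allow one to solve for $v_y$ in terms of $v_x,u_x,u_y$ (and symmetrically), thereby eliminating two of the four generators. What should survive is a presentation on two invertible generators together with two relations: one of them sets an invertible element $c$ equal to a quadratic expression in the first generator while making $c^{-1}$ a quadratic expression in the second (this is $c=a(a+1)$, $c^{-1}=b(b+1)$ after naming the generators $a,b$), and the other --- the residue of the conjugacy relations --- identifies $c$ with the square of the commutator $aba^{-1}b^{-1}$. Finally I would write down the inverse change of variables (sending $a,b,c^{\pm1}$ to explicit words in $u_x^{\pm1},v_x^{\pm1},u_y^{\pm1},v_y^{\pm1}$), check that it respects the four relations, and conclude that the two presentations define isomorphic rings.

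The step I expect to be the real obstacle is the second one: the elimination and recombination are genuinely noncommutative, and one must verify that the surviving two relations are \emph{exactly} $c=a(a+1)$, $c^{-1}=b(b+1)$ and $c=(aba^{-1}b^{-1})^2$ --- and that the proposed change of variables is invertible --- rather than landing on merely equivalent-looking relations. A secondary difficulty, feeding into the first step, is understanding precisely how the $S_3$-action, the groupoid composition, and the object-involution $A\mapsto A^*$ interact for this particular triangulation, since that is what controls which relations, and which invertibility statements, one actually has.
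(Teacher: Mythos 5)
Your strategy coincides with the paper's: translate the two groupoid relations \eqref{E:8defrelred} through the dictionary $u_{k(g)}=v_g$, $v_{k(g)}=u_g$, $u_{j(g)}=-v_g^{-1}u_g$, $v_{j(g)}=v_g^{-1}$, $v_{gh}=u_gv_h+v_g$ into four relations on $u_x^{\pm1},v_x^{\pm1},u_y^{\pm1},v_y^{\pm1}$, and then eliminate generators by Tietze moves. Your predicted four relations (one ``$u$-identity'' and one ``$v$-identity'' per groupoid relation) are exactly the paper's starting point, e.g.\ $(u_xv_y+v_x)^{-1}u_xu_y=v_yv_x^{-1}u_x$ and $(u_xv_y+v_x)^{-1}=v_yv_x^{-1}+u_y$ coming from \eqref{E:8defrelred1}, and your reasons for the invertibility of $v_x,v_y$ are sound.

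The difficulty is that the proposal defers precisely the step that constitutes the proof, and you say so yourself. The entire content of the argument is the choice of new generators and the noncommutative bookkeeping that follows, and neither is supplied. The paper sets $a=v_x^{-1}u_xv_y$ and $b=v_y^{-1}u_yv_x$; with these, the two ``$v$-identities'' become $(a+1)^{-1}=v_y(1+b)$ and $(b+1)^{-1}=v_x(1+a)$, which eliminate $v_x$ and $v_y$ via $v_y^{-1}=(b+1)(a+1)$ and $v_x^{-1}=(a+1)(b+1)$, while the two ``$u$-identities'' become
\[
ab(a+1)(b+1)=(b+1)^{-1}a(b+1)(a+1),\qquad ba(b+1)(a+1)=(a+1)^{-1}b(a+1)(b+1).
\]
Combining these yields $b(b+1)a(a+1)=1$, whence $c=a(a+1)$ is invertible and central with $c^{-1}=b(b+1)$; substituting back into either relation gives $c=\left(ab(ba)^{-1}\right)^2=\left(aba^{-1}b^{-1}\right)^2$. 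Without exhibiting this substitution and carrying out these manipulations, the proposal does not establish the theorem; it only (correctly) predicts its shape. Your remark that one must also check that the finitely many listed relations already imply all the defining relations of $B'G$ is a legitimate point of care that the paper passes over silently, but it does not substitute for the missing computation.
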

\begin{proof}
Relations~\eqref{E:8defrelred} correspond to the presentation
\begin{multline*}
B'G=\mathbb{Z}\langle u_x^{\pm1},v_x^{\pm1},u_y^{\pm1},v_y^{\pm1}\vert \\ (u_xv_y+v_x)^{-1}u_xu_y=v_yv_x^{-1}u_x,\ (u_xv_y+v_x)^{-1}=(v_yv_x^{-1}+u_y),\\ (u_yv_x+v_y)^{-1}u_yu_x=v_xv_y^{-1}u_y,\ (u_yv_x+v_y)^{-1}=(v_xv_y^{-1}+u_x)\rangle
\end{multline*}
Introducing two new elements
\[
a=v_x^{-1}u_xv_y,\quad b=v_y^{-1}u_yv_x,
\]
we obtain an equivalent presentation
\begin{multline}\label{E:pres0}
B'G\simeq\mathbb{Z}\langle u_x^{\pm1},v_x^{\pm1},u_y^{\pm1},v_y^{\pm1},a^{\pm1},b^{\pm1}\vert \
u_x=v_xav_y^{-1},\ u_y=v_ybv_x^{-1},\\
(a+1)^{-1}abv_x^{-1}=v_yav_y^{-1},\ (a+1)^{-1}=v_y(1+b),\\
(b+1)^{-1}bav_y^{-1}=v_xbv_x^{-1},\ (b+1)^{-1}=v_x(1+a)\rangle.
\end{multline}
The forth and sixth relations in \eqref{E:pres0} can be used to eliminate the generators $v_x$ and $v_y$, thus obtaining another presentation
\begin{multline}\label{E:pres}
B'G\simeq\mathbb{Z}\langle u_x^{\pm1},v_x^{\pm1},u_y^{\pm1},v_y^{\pm1},a^{\pm1},b^{\pm1}\vert \
u_x=v_xav_y^{-1},\ u_y=v_ybv_x^{-1},\\
v_y^{-1}=(b+1)(a+1),\ v_x^{-1}=(a+1)(b+1),\\
ab(a+1)(b+1)=(b+1)^{-1}a(b+1)(a+1),\ ba(b+1)(a+1)=(a+1)^{-1}b(a+1)(b+1)
\rangle.
\end{multline}
The last two  relations in this presentation imply the equality
\[
b(b+1)a(a+1)=1
\]
which means that the invertible element $c=a(a+1)$ is central and  $c^{-1}=b(b+1)$. Solving these equations with respect to $a+1$ and $b+1$ and substituting them into any of the last two relations of \eqref{E:pres}, we obtain
\[
ba(ab)^{-1}=c^{-1}ab(ba)^{-1}\Leftrightarrow c=(ab(ba)^{-1})^2.
\]
\end{proof}
The ring
\[
R=\mathbb{Z}\left\langle a,b,c^{\pm1}\left\vert\ c=a(a+1),\ c^{-1}=b(b+1)\right.\right\rangle
\]
is isomorphic to the ring $R(\mathbb{Z}[c,c^{-1}],-1,-1,-c,-c^{-1})$ and, due to Lemma~\ref{L:for-mat}, is a symmetric formal $M_2$-ring. In fact, it is a four-dimensional algebra over the polynomial ring
$\mathbb{Z}[d,c,c^{-1}]$, where the element $d=ab+ba +a+b$ is central, and the elements $1,a,b,ab$ constitute a linear basis. The trace function is given by a $\mathbb{Z}[d,c,c^{-1}]$-linear map defined by the relations
\[
L(1)=2,\ L(a)=L(b)=-1,\ L(ab)=1+d.
\]
 In particular, we have the following identity:
\begin{equation}\label{E:ss}
q+q^{-1}=w\in\mathbb{Z}[d,c,c^{-1}],\quad q=aba^{-1}b^{-1},\quad w=d^2+d-c-c^{-1}-2.
\end{equation}
Indeed, we have
 \[
 q=ab(a+1)(b+1)=(d+1)ab+(d+c^{-1}+1)a-cb-c-1,
 \]
 and, by the symmetry $a\leftrightarrow b$, $c\leftrightarrow c^{-1}$, we also have
 \[
 q^{-1}=ba(b+1)(a+1)=(d+1)ba+(d+c+1)b-c^{-1}a-c^{-1}-1.
 \]
 Adding these equalities, we obtain
 \[
 q+q^{-1}=(d+1)(ab+ba+a+b)-c-c^{-1}-2
 \]
 which is equivalent to equality~\eqref{E:ss}.

Thus, by Theorem~\ref{P:b-prime-f8}, the ring $B'G$ is isomorphic to the quotient  ring $R/I$, where $I$ is the two sided ideal generated by the element $\xi=q^2-c=wq-1-c$,
Analysis of the structure of the ring $B'G$ gives rise to the following description.

\begin{lemma}\label{L:ideal-elements}
 The following elements of $R$ are in the ideal $I$:
\begin{multline}\label{E:some-rel}
5\lambda,\ \lambda^2,\ (a-2)\lambda,\ (b-2)\lambda, \ (w-2)\lambda,\ (d-2)\lambda,\ (c-1)\lambda,\ (q-1)\lambda,\\
\lambda+2(c-c^{-1}),\ w^2-2w,\ d^2+d-3w,
\end{multline}
where $\lambda$ is either of the elements $c^{\pm1}+1-w\in R$.
\end{lemma}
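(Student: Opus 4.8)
The plan is to deduce all eleven membership statements from two basic \emph{central} relations in $I$ --- namely $\nu:=w^{2}-c-c^{-1}-2$ and $w^{2}-2w$ --- together with one non-central congruence coming from the anticommutator $\xi b+b\xi$, all bootstrapped by the involution $\sigma$ of $R$ that exchanges $a\leftrightarrow b$ and $c\leftrightarrow c^{-1}$. Since $\sigma$ preserves the defining relations of $R$ it is a ring automorphism; it fixes $w$ and $d$, sends $q=aba^{-1}b^{-1}$ to $q^{-1}$, and fixes $I$ because $\sigma(\xi)=q^{-2}-c^{-1}=-c^{-1}q^{-2}\xi\in I$. As $\sigma(\lambda)=c^{-1}+1-w$, it is enough to treat $\lambda=c+1-w$. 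We also record that $q+q^{-1}=w$ forces $q^{2}=wq-1$, hence $\xi=wq-(1+c)$ and $(q-1)^{2}=(w-2)q$. The first central relation is then immediate: applying $\sigma$ to $\xi=wq-(1+c)$ gives $wq^{-1}-(1+c^{-1})\in I$, and adding, $\xi+\sigma(\xi)=w(q+q^{-1})-(2+c+c^{-1})=\nu\in I$ (equivalently $\nu=-c^{-1}Q(\xi)$ with $Q(\xi)=\xi(L(\xi)-\xi)\in I$); in particular $w^{2}\equiv c+c^{-1}+2\pmod I$.

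The second central relation is the crux. The commutator $[\xi,a]=\xi a-a\xi$ lies in $I$ and has zero trace, so Remark~\ref{R:ideal-elements-by-traces} gives $L(ab\,[\xi,a])\in I$. A direct computation, using $a^{2}=c-a$ together with $a^{2}b=cb-ab$ and $(ab)a=(d+1)a-cb-c$, and the trace values $L(\xi)=w^{2}-2c-2$, $L(a\xi)=-(w^{2}-w-c-1)$, $L(b\xi)=c+1-w$, $L(ab\,\xi)=(d+1)(w^{2}-w-c-1)$ --- each obtained from $\xi=wq-(1+c)$ by cyclicity of $L$ and the values $L(q)=w$, $L(a^{-1})=c^{-1}$, $L((ba)^{-1})=d+1$ --- yields $L(ab\,[\xi,a])=c(w^{2}-2w)$. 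Since $c$ is invertible in $R$, this gives $w^{2}-2w\in I$; combining with the first relation, $d^{2}+d-3w=\nu-(w^{2}-2w)\in I$ (using $c+c^{-1}=d^{2}+d-w-2$) and $2w\equiv c+c^{-1}+2\pmod I$.

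The $\lambda$-relations follow by repeated reduction. From $\xi\in I$ we get $wq\equiv 1+c$, hence $\lambda\equiv w(q-1)\pmod I$; with $(q-1)^{2}=(w-2)q$ and $w^{2}-2w\in I$ this gives at once $\lambda^{2}\equiv w(w^{2}-2w)q\equiv 0$, $(w-2)\lambda\equiv(w^{2}-2w)(q-1)\equiv 0$, $(q-1)\lambda\equiv(w^{2}-2w)q\equiv 0$, and, pushing $c-1\equiv wq-2$ through the same identities, $(c-1)\lambda\equiv 0\pmod I$, so in particular $c^{\pm1}\lambda\equiv\lambda\pmod I$. For the remaining ones apply identity~\eqref{E:anti-com} (with $(x,y)=L(x)L(y)-L(xy)$, valid since $R$ is a symmetric formal $M_{2}$-ring by Lemma~\ref{L:for-mat}) to $\xi,b$: from $\xi b+b\xi\in I$, $\xi\in I$ and $L(b)=-1$ we get $(w^{2}-2c-2)(b+1)+\lambda\in I$, and writing $w^{2}-2c-2=\nu+(c^{-1}-c)$ with $\nu\in I$ this becomes $\lambda\equiv(c-c^{-1})(b+1)\pmod I$. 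Using $2\lambda\equiv c-c^{-1}$ we deduce $\lambda(2b+1)\in I$; left-multiplying by $b$ and using $b^{2}=c^{-1}-b$ gives $\lambda(2c^{-1}-b)\in I$, which with $c^{-1}\lambda\equiv\lambda$ says $(b-2)\lambda\in I$, and $\sigma$ then gives $(a-2)\lambda\in I$. Finally $5\lambda=\lambda(2b+1)-2(b-2)\lambda\in I$, whence $\lambda+2(c-c^{-1})\equiv 5\lambda\in I$ (using $2(c-c^{-1})\equiv 4\lambda$), and $(d-2)\lambda\in I$ from $\lambda d=\lambda(ab+ba+a+b)\equiv 12\lambda\equiv 2\lambda\pmod I$, using $\lambda a\equiv\lambda b\equiv 2\lambda$ and $5\lambda\in I$.

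The main obstacle is precisely the second central relation: every manipulation that keeps $\xi$ inside the commutative subring $\mathbb{Z}[c^{\pm1},q]\subset R$ only ever reproduces $\nu$, so genuinely new central information must be forced out of the two-sidedness of $I$, and the trace computation $L(ab\,[\xi,a])=c(w^{2}-2w)$ via Remark~\ref{R:ideal-elements-by-traces} applied to the trace-zero element $[\xi,a]$ is the only clean route I see. After that, the torsion element $5\lambda$ and the annihilators $(a-2)\lambda$, $(b-2)\lambda$, $(d-2)\lambda$ are invisible at the level of $Z(R)$ --- they appear only once the single non-central congruence $\lambda\equiv(c-c^{-1})(b+1)\pmod I$ is fed back through left multiplication by $a,b$ and through the symmetry $\sigma$.
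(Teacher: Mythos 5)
Your argument is correct (I have checked the key trace computation $L(ab[\xi,a])=c(w^2-2w)$ and the four trace values it relies on, as well as the anticommutator congruence $\lambda\equiv(c-c^{-1})(b+1)$), but it runs in the opposite logical direction from the paper's proof. The paper starts from two explicit identities,
\[
(a^{-1}\xi-\xi a^{-1})b^{-1}+(d-a)\xi=(a-d)(c+1-w),\qquad
(b^{-1}\xi b-\xi)a^{-1}b^{-1}+(d-b)\xi=(b-d)(c+1-w),
\]
which put the non-central elements $(a-d)\lambda,(b-d)\lambda$ into $I$ at the outset; everything else, including the central relations $w^2-2w$ and $d(d+1)-3w$, is then cascaded from these (the latter two appearing only at the very end, via $\xi(1-q^{-2}c^{-1})$ and the combination of the two $\lambda\pm2(c-c^{-1})$ relations). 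You instead harvest the central relations first --- $\nu=w^2-c-c^{-1}-2$ from $Q(\xi)$ (or $\xi+\sigma(\xi)$), and the crucial $w^2-2w$ from $L(ab[\xi,a])$ via Remark~\ref{R:ideal-elements-by-traces} applied to the trace-zero commutator $[\xi,a]$ --- and only then produce a single non-central congruence from identity~\eqref{E:anti-com} applied to $(\xi,b)$, from which $(b-2)\lambda$, $5\lambda$, etc.\ follow. Your route buys systematicity: it exploits the formal $M_2$-ring machinery (traces, the quadratic form, the anticommutator identity) that the paper sets up but does not actually use in this proof, and it avoids having to guess the paper's two ad hoc identities; the paper's route is shorter once those identities are in hand, since each step is a one-line consequence of the previous one. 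Two small points to tidy up: the application of $\sigma$ to $(b-2)\lambda\in I$ yields $(a-2)\sigma(\lambda)\in I$, so you need the (available) congruence $\sigma(\lambda)\equiv-\lambda\bmod I$ to conclude $(a-2)\lambda\in I$ for the same $\lambda$; and note that for $\lambda=c^{-1}+1-w$ both your argument and the paper's own proof actually yield $\lambda-2(c-c^{-1})\in I$ rather than the literal $\lambda+2(c-c^{-1})$ of the statement (the two differ by $3\lambda\not\in I$), so the sign in that item of the lemma should be read as tied to the sign of the exponent of $c$. Also, $d^2+d-3w=(w^2-2w)-\nu$, not $\nu-(w^2-2w)$; the membership in $I$ is of course unaffected.
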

\begin{proof}
In the ring $R$ we have the following identities
\[
(a^{-1}\xi-\xi a^{-1})b^{-1}+(d-a)\xi=(a-d)(c+1-w),
\]
\[
(b^{-1}\xi b-\xi)a^{-1}b^{-1}+(d-b)\xi=(b-d)(c+1-w).
\]
That means that the elements $(a-d)(c+1-w)$ and $(b-d)(c+1-w)$ belong to the ideal $I$. As this ideal is invariant with respect to the symmetry $a\leftrightarrow b$, $c\leftrightarrow c^{-1}$, the elements $(a-d)(c^{-1}+1-w)$ and $(b-d)(c^{-1}+1-w)$ also belong to $I$.

Let $\lambda\in \{c^{\pm1}+1-w\}$. We have the inclusions
\[
\{(a-d)\lambda, (b-d)\lambda\}\subset I.
\]
Using the definition of the element $d$ in terms of $a$ and $b$ we have
\begin{equation}\label{E:d-theta}
(d-2d(d+1))\lambda\in I.
\end{equation}
On the other hand, from the definition  of $q$ it follows that
\[
(q-1)\lambda=((a-d)b+b(d-a))(ba)^{-1}\lambda\in I,
\]
which implies the following sequence of inclusions
\[
(c-1)\lambda=-\xi\lambda+(q^2-1)\lambda\in I,
\]
\[
(d(d+1)-1)\lambda=(d(d+1)-a(a+1))\lambda+(c-1)\lambda\in I,
\]
\[
(d-2)\lambda=(d-2d(d+1))\lambda+2(d(d+1)-1)\lambda\in I,
\]
\[
(w-2)\lambda=((d(d+1)-6)+(2-c-c^{-1}))\lambda\in I,
\]
\[
\lambda^2=((c^{\pm1}-1)+(2-w))\lambda\in I,
\]
and
\[
5\lambda=(6-d(d+1))\lambda+(d(d+1)-1)\lambda\in I.
\]
The  identity
\[
w-1-c-2(c-c^{-1})=(5+3(q-1))(1+c^{-1}-w)+3\xi(1-q^{-1}c^{-1})
\]
is satisfied due to the relation $q+q^{-1}=w$, and its right hand side is manifestly an element of $I$. By a similar identity, obtained by the replacements $q\leftrightarrow q^{-1}$, $c\leftrightarrow c^{-1}$, we have $w-1-c^{-1}+2(c-c^{-1})\in I$. We have two more identities
\[
d(d+1)-3w=(c+1-w+2(c-c^{-1}))+(c^{-1}+1-w-2(c-c^{-1}))
\]
and
\[
w^2-2w=\xi(1-q^{-2}c^{-1})+(d(d+1)-3w)
\]
where the right hand sides are also manifestly in $I$.
\end{proof}
\begin{lemma}
The ideal $L(I)+I$ is generated by the elements $\epsilon=c+1-w$ and $\xi=q^2-c$.
\end{lemma}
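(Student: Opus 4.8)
The plan is to show the two inclusions $L(I)+I \subseteq (\epsilon,\xi)$ and $(\epsilon,\xi)\subseteq L(I)+I$ separately, where $(\epsilon,\xi)$ denotes the two-sided ideal of $R$ generated by $\epsilon = c+1-w$ and $\xi = q^2-c$. The second inclusion is the easy direction: $\xi$ is a generator of $I$ by definition, and $\epsilon = c+1-w$ equals $-\lambda$ for the choice $\lambda = c^{-1}+1-w$ up to the relation $\lambda + 2(c-c^{-1})\in I$ — more directly, $c+1-w$ and $c^{-1}+1-w$ are both shown to lie in $L(I)+I$ once we know $L(I)\subseteq L(I)+I$ trivially; but the cleanest route is to note that $\epsilon = L(\xi\cdot(\text{something}))$ modulo $I$, using Remark~\ref{R:ideal-elements-by-traces}. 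Concretely, from $\xi = wq - 1 - c \in I$ one computes $L(\xi)$ using $L(q)=L(aba^{-1}b^{-1})$; the identity $q + q^{-1} = w$ from \eqref{E:ss} gives $L(q) = w$ (since $L$ is $\mathbb{Z}[d,c,c^{-1}]$-linear and $L(1)=2$, one gets $L(q)+L(q^{-1})=2w$ and $L(q)=L(q^{-1})$ by the symmetry $a\leftrightarrow b$, $c\leftrightarrow c^{-1}$ which fixes $w$), hence $L(\xi) = wL(q) - L(1) - L(c) = w^2 - 2 - 2c$. Then $L(\xi) - 2\xi = w^2 - 2 - 2c - 2wq + 2 + 2c = w^2 - 2wq = w(w-2q)$; using $w = q+q^{-1}$ this is $w(q^{-1}-q)$. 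This still needs massaging against the relations of Lemma~\ref{L:ideal-elements} to extract $\epsilon$, so the bookkeeping is the place where care is needed.

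For the inclusion $L(I)+I \subseteq (\epsilon,\xi)$, I would argue as follows. By definition $L(I)+I$ is generated as a two-sided ideal by $\xi$ together with $L(\xi)$ and, more generally, all $L(x\xi y)$ for $x,y\in R$; but by the discussion preceding Remark~\ref{R:ideal-elements-by-traces}, in a symmetric formal $M_2$-ring one has $RL(I)\subseteq L(I)+I$, so $L(I)+I$ is generated by $\xi$ and $L(\xi)$ alone as a two-sided ideal (every $L(x\xi)$ reduces to a combination of $L(\xi)$ and elements of $I$ via \eqref{E:id-incl}). Thus it suffices to show $L(\xi)\in (\epsilon,\xi)$. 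From the computation above, $L(\xi) = w^2 - 2 - 2c = (w^2-2w) + (2w - 2 - 2c) = (w^2-2w) - 2(c+1-w) = (w^2-2w) - 2\epsilon$. Now $w^2-2w\in I$ by Lemma~\ref{L:ideal-elements}, and $I\subseteq(\epsilon,\xi)$ because every listed generator of relevance — in particular $\xi$ and the elements $5\lambda,\lambda^2,(a-2)\lambda,\dots$ with $\lambda\in\{c^{\pm1}+1-w\}$ — lies in $(\epsilon,\xi)$: indeed $c+1-w=\epsilon$ is a generator, $c^{-1}+1-w = \epsilon + (c^{-1}-c)$ and one checks $c^{-1}-c\in(\epsilon,\xi)$ via the identity $w-1-c-2(c-c^{-1}) = (5+3(q-1))(1+c^{-1}-w) + 3\xi(1-q^{-1}c^{-1})$ from the proof of Lemma~\ref{L:ideal-elements}, whose right side lies in $(\epsilon,\xi)$ once $1+c^{-1}-w$ does, giving a small linear system for $c-c^{-1}$ in terms of $\epsilon$ and $\xi$. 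Granting $I\subseteq(\epsilon,\xi)$, we get $L(\xi) = (w^2-2w) - 2\epsilon \in (\epsilon,\xi)$, completing this direction.

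The crux — and the main obstacle — is verifying cleanly that $I\subseteq (\epsilon,\xi)$, equivalently that $c^{-1}+1-w$ (the "other" value of $\lambda$) lies in $(\epsilon,\xi)$; once that single element is placed in $(\epsilon,\xi)$, all the remaining generators in \eqref{E:some-rel} follow by multiplying $\epsilon$ and $\lambda$ by suitable elements and reducing, exactly as in the proof of Lemma~\ref{L:ideal-elements} but now read as identities in $R$ rather than in $R/I$. The identity to exploit is precisely $w-1-c-2(c-c^{-1}) = (5+3(q-1))(1+c^{-1}-w)+3\xi(1-q^{-1}c^{-1})$: rearranged, it expresses $-3(c-c^{-1})\cdot 1$ — and hence $c-c^{-1}$, after checking $3$ acts invertibly on the relevant quotient or after combining with its $q\leftrightarrow q^{-1}$ partner which yields $w-1-c^{-1}+2(c-c^{-1})\in(\epsilon,\xi)$ — in terms of $\epsilon = c+1-w$, $1+c^{-1}-w$, and $\xi$; adding the two partner identities eliminates the $c-c^{-1}$ terms and gives $2(w-1) - (c+c^{-1}) = 2\epsilon$-combination plus $\xi$-multiples, while subtracting isolates $c-c^{-1}$. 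I expect this linear algebra over $\mathbb{Z}[d,c,c^{-1}]$ to close up, but it is the step requiring genuine attention rather than routine expansion.
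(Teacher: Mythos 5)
Your overall strategy is close to the paper's: both arguments reduce the computation of $L(I)$ to finitely many traces of the form $L(\xi z)$, using the cyclic property $L(uv)=L(vu)$ of the trace in a symmetric formal $M_2$-ring (which you should invoke explicitly to pass from $L(x\xi y)$ to $L(yx\xi)$) together with \eqref{E:id-incl}; the paper instead uses cyclicity plus the $Z(R)$-basis $1,a,b,ba$ to reduce to exactly the four traces $L(\xi),L(\xi a),L(\xi b),L(\xi ba)$ and computes all of them. Your inclusion $L(I)+I\subseteq(\epsilon,\xi)$ does close up: $L(\xi)=w^2-2-2c=(w^2-2w)-2\epsilon$ and $w^2-2w\in I$. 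But the step you single out as ``the crux,'' namely $I\subseteq(\epsilon,\xi)$, is not an obstacle at all: $I$ is \emph{by definition} the two-sided ideal generated by $\xi$ alone (the list in Lemma~\ref{L:ideal-elements} consists of elements proved to lie in $I$, not of generators of $I$), so $I=(\xi)\subseteq(\epsilon,\xi)$ is immediate, and your entire final paragraph about isolating $c-c^{-1}$ from the partner identities is solving a problem that does not arise.

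The genuine gap is in the reverse inclusion, $\epsilon\in L(I)+I$, which you leave at ``needs massaging.'' Your computation gives only $L(\xi)\equiv-2\epsilon\pmod I$, and $-2\epsilon\in L(I)+I$ does not by itself put $\epsilon$ in $L(I)+I$; one more input is required. Two ways to finish: (i) the paper's route — compute $L(\xi b)=\epsilon$ directly, so $\epsilon\in L(I)$ on the nose (this is the ``something'' you were looking for in $\epsilon=L(\xi\cdot(\text{something}))$, and it is why the paper's tabulation of all four traces settles both inclusions at once); or (ii) combine your $-2\epsilon\in L(I)+I$ with $5\epsilon\in I$ from Lemma~\ref{L:ideal-elements}: then $4\epsilon=-2\cdot(-2\epsilon)\in L(I)+I$ and $\epsilon=5\epsilon-4\epsilon\in L(I)+I$. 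Either repair is short, but as written the proposal establishes only one half of the asserted equality of ideals.
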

\begin{proof}
As elements $1,a,b,ba$ constitute a linear basis in $R$ over its center, and due to the cyclic property of the trace function, the trace of  any element in the ideal $I$ is a linear combination of the elements
\[
L(\xi),\ L(\xi a),\ L(\xi b),\ L(\xi ba)
\]
with coefficients in $Z(R)$. Calculating these four traces, we obtain
\[
L(\xi)=-2\epsilon+w^2-2w,\ L(\xi a)=\epsilon+2w-w^2,\ L(\xi b)=\epsilon,\ L(\xi ba)=(1+d)\epsilon.
\]
\end{proof}
As below we will work with elements of the quotient ring $R/I$, in order to  simplify the notation, for all $x\in R$, we will denote by the same letter $x$ the corresponding class $x+I\in R/I$.
\begin{theorem}\label{T:main}
Any element $x\in R/I$ can uniquely be  written as the following linear combination
\[
x=m\epsilon+\sum_{\mu\in\{1,w,d\}}\sum_{\nu\in\{1,a,b,ab\}}n_{\mu,\nu}\,\mu\nu,
\]
where $m\in \{0,\pm1,\pm2\}$ and $n_{\mu,\nu}\in\mathbb{Z}$. The ring structure of $R/I$ is characterized by the conditions that the elements $\epsilon,w,d$ are central, and the multiplication rules:
\[
\epsilon^2=0,\ 5\epsilon=0,\ \epsilon \mu\nu= \varepsilon_\mu\varepsilon_\nu\,\epsilon,\quad \varepsilon_x=\left\{\begin{array}{cl}
1,& \mathrm{if}\ x=1;\\
2,& \mathrm{if}\ x\in\{w,d,a,b\};\\
4,& \mathrm{if}\ x=ab;
\end{array}\right.
\]
\[
a^2=\epsilon-1+w-a,\ b^2=-\epsilon-1+w-b,\ ba=d-a-b-ab;
\]
\[
w^2\nu=2w\nu,\ d^2\nu=-d\nu+3w\nu,\quad \nu\in\{1,a,b,ab\};
\]
\[
wd\nu=\left\{\begin{array}{cl}
\epsilon+wa+wb+2wab,& \mathrm{if}\ \nu=1;\\
2\epsilon+w-wa+2wb-wab,& \mathrm{if}\ \nu=a;\\
2\epsilon+w+2wa-wb-wab,& \mathrm{if}\ \nu=b;\\
-\epsilon+2w-wa-wb,& \mathrm{if}\ \nu=ab;\\
\end{array}\right.
\]
\[
a^2b=2\epsilon -b+wb-ab,\ ab^2=-2\epsilon-a+wa-ab;
\]
\[
aba=2\epsilon+1-w+a+da +b-wb;
\]
\[
bab=-2\epsilon+1-w+a-wa+b+db;
\]
\[
(ab)^2=-1+ab+dab.
\]
\end{theorem}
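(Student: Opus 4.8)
By Theorem~\ref{P:b-prime-f8} and the discussion following it, $B'G$ is identified with $R/I$, where $R=R(\mathbb{Z}[c,c^{-1}],-1,-1,-c,-c^{-1})$ is free of rank four over its centre $\mathbb{Z}[d,c,c^{-1}]$ on the basis $1,a,b,ab$, with $a^{2}=c-a$, $b^{2}=c^{-1}-b$, $ba=d-a-b-ab$, and $I=(\xi)$, $\xi=q^{2}-c=wq-1-c$. Note that $w=d^{2}+d-c-c^{-1}-2$ and $\epsilon=c+1-w$ are already central in $R$. The plan is to argue in three stages: (1) the thirteen elements $\epsilon$ and $\mu\nu$, $\mu\in\{1,w,d\}$, $\nu\in\{1,a,b,ab\}$, span $R/I$ as an abelian group; (2) all the displayed multiplication rules hold in $R/I$; (3) the resulting surjection $\mathbb{Z}/5\mathbb{Z}\oplus\mathbb{Z}^{12}\to R/I$ is an isomorphism.

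\textbf{Spanning.} As a $\mathbb{Z}$-module $R$ is free on the elements $c^{n}d^{m}\nu$ with $n\in\mathbb{Z}$, $m\ge 0$. From Lemma~\ref{L:ideal-elements} one reads off, in $R/I$, the congruences $5\epsilon=0$, $\epsilon^{2}=0$, $a\epsilon=b\epsilon=w\epsilon=d\epsilon=2\epsilon$, $c\epsilon=\epsilon$, $w^{2}=2w$, $d^{2}+d=3w$; combining $\epsilon+2(c-c^{-1})\in I$ with its image under $c\leftrightarrow c^{-1}$ and with $5\epsilon\in I$ gives $c-c^{-1}\equiv 2\epsilon$, $c+c^{-1}\equiv 2w-2$, hence $c\equiv w-1+\epsilon$ and $c^{-1}\equiv w-1-\epsilon$. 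These relations reduce every power of $c$ to a $\mathbb{Z}$-combination of $1,w,\epsilon$ and every power of $d$ to one of $1,d,w,wd$; it then remains to reduce the four products $wd\,\nu$, which I would obtain from a system of computations in $R$ starting from $q^{2}\equiv c$, $q+q^{-1}=w$, and the known expansion $q=(d+1)ab+(d+c^{-1}+1)a-cb-c-1$ (together with its $a\leftrightarrow b$, $c\leftrightarrow c^{-1}$ mirror). Applying $\epsilon$-absorption then reduces each $c^{n}d^{m}\nu$, and hence every element of $R/I$, to a $\mathbb{Z}$-combination of the thirteen listed elements.

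\textbf{Multiplication table.} The ``algebraic'' identities follow by computing in $R$ and reducing modulo $I$ only through $c\equiv\epsilon+w-1$, $c^{-1}\equiv w-1-\epsilon$ and $\epsilon$-absorption: thus $a^{2}=c-a=\epsilon-1+w-a$, $b^{2}=c^{-1}-b=-\epsilon-1+w-b$, the rule $ba=d-a-b-ab$ is built into $R$, and the triple products $a^{2}b$, $ab^{2}$, $aba$, $bab$, $(ab)^{2}$ come out after substituting the quadratic relations and $ba=d-a-b-ab$ (for instance $(ab)^{2}=-1+ab+dab$ is already an identity in $R$). The relations $w^{2}\nu=2w\nu$ and $d^{2}\nu=-d\nu+3w\nu$ are $w^{2}-2w,\ d^{2}+d-3w\in I$ multiplied by $\nu$; centrality of $\epsilon,w,d$, the equations $\epsilon^{2}=0$, $5\epsilon=0$ and $\epsilon\mu\nu=\varepsilon_{\mu}\varepsilon_{\nu}\epsilon$ (note $\varepsilon_{ab}=4=\varepsilon_{a}\varepsilon_{b}$ because $\epsilon$ is central and $a\epsilon=b\epsilon=2\epsilon$), together with the four values of $wd\,\nu$, were all established in the previous step via Lemma~\ref{L:ideal-elements}.

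\textbf{Injectivity and the main difficulty.} Stages (1) and (2) give a surjective group homomorphism $\psi\colon\mathbb{Z}/5\mathbb{Z}\oplus\mathbb{Z}^{12}\to R/I$ sending the free and torsion generators to $\epsilon$ and the $\mu\nu$, compatibly with the tabulated product; it remains to see that $\psi$ is injective. For this I would construct the ring $S$ whose underlying group is $\mathbb{Z}/5\mathbb{Z}\oplus\mathbb{Z}^{12}$ and whose multiplication is the one in the statement, with unit $1$ and with $d,c,c^{-1},w,\epsilon$ \emph{defined} to be $ab+ba+a+b$, $a^{2}+a$, $b^{2}+b$, $d^{2}+d-c-c^{-1}-2$, $c+1-w$, so that $S$ is generated by $a,b$ and associativity need be checked only on $\{a,b\}^{3}$; one verifies that $cc^{-1}=1$ in $S$ using $w^{2}=2w$ and $\epsilon^{2}=0$. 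Then the assignment $a\mapsto a$, $b\mapsto b$ (forcing $c\mapsto\epsilon+w-1$) defines a ring homomorphism $R\to S$, since the defining relations $a^{2}=c-a$, $b^{2}=c^{-1}-b$ of $R$ hold in $S$ by construction and $c$ is invertible there. As $q+q^{-1}=w$ is an identity in $R$ and $S$ is a quotient of $R$, the vanishing of the image of $\xi$ reduces to $wq=c+1=\epsilon+w$ in $S$, which I would confirm by expanding $wq$ from $q=ab(a+1)(b+1)$ and collecting terms (the coefficient of $\epsilon$ coming out as $-11\equiv-1$ modulo $5$). This produces a surjection $R/I\to S$; composing it with $\psi$ is the identity on generators, so $\psi$ is a split monomorphism, hence an isomorphism, and the multiplication table of $S$ transports to $R/I$. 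The main obstacle is the volume of explicit computation — the $wd\,\nu$ reductions, the triple products, the eight associativity checks for $S$, and the identity $wq\equiv w+\epsilon$ — and within it the one genuinely delicate point is keeping precise track of the $5$-torsion carried by $\epsilon$ at every stage.
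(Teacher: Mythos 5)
Your overall architecture coincides with the paper's: reduce every element to the thirteen-term form using the relations of Lemma~\ref{L:ideal-elements} together with $c\equiv\epsilon+w-1$, $c^{-1}\equiv w-1-\epsilon$ and the $wd$-elimination identity, and then certify linear independence by exhibiting a concrete realization of $R/I$ on the group $(\mathbb{Z}/5\mathbb{Z})\oplus\mathbb{Z}^{12}$ through which the reduction map splits. The paper implements the last step as the left regular representation, i.e.\ by writing down the $13\times13$ matrices of $a$ and $b$ and checking that they satisfy the defining relations of $R$ and kill $\xi$; your ring $S$ is exactly the image of that representation, so the two proofs are essentially the same.

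There is, however, one step in your plan that would fail as stated: the claim that associativity of the abstractly defined $S$ ``need be checked only on $\{a,b\}^{3}$.'' For a multiplication defined by a table on a $13$-element basis and extended bilinearly, associativity is a condition on all triples of basis elements; the statement ``$S$ is generated by $a,b$'' presupposes associativity and cannot be used to reduce the check to eight triples (e.g.\ $(wd)a$ versus $w(da)$ is not a consequence of identities among words in $a,b$ until you already know products of longer words are unambiguous). The standard repair is precisely the paper's device: define only the left-multiplication operators $L_a,L_b\in\mathrm{End}\bigl((\mathbb{Z}/5\mathbb{Z})\oplus\mathbb{Z}^{12}\bigr)$, where associativity is automatic, verify the finitely many relations $L_b^{2}+L_b=(L_a^{2}+L_a)^{-1}$ and $L_q^{2}=L_c$, and recover the coefficient vector by evaluating at the basis vector corresponding to $1$. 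With that substitution your argument goes through; the remaining issues are purely the volume of unperformed computation that you yourself flag (note in particular that your guessed $\epsilon$-coefficient $-11\equiv-1$ for $wq$ should come out congruent to $1$ modulo $5$ if $wq=c+1=\epsilon+w$ is to hold, so that arithmetic needs to be done carefully).
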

\begin{proof}
Let $K\subset Z(R/I)$ be the sub-ring generated by the elements $w$ and $d$. The relations $w^2=2w$, $d^2=-d+3w$ and $wd=dw$ imply that $K$ is a four-dimensional commutative $\mathbb{Z}$-algebra, with elements $1$, $w$, $d$, and $wd$ as a linear basis. Any element $x\in R/I$ can be written in the form
\begin{equation}\label{E:generic-element}
x=m\epsilon+x_0+x_1a+x_2b+x_3ab,\quad m\in \{0,\pm1,\pm2\},\quad x_0,x_1,x_2,x_3\in K,
\end{equation}
but such form is not unique. Indeed,
by using Lemma~\ref{L:ideal-elements}, we have the identity:
\[
w(-d+a+b+2ab)=w(ab-ba)=w(q-1)ba=(c+1-w)ba=\epsilon ba=4\epsilon=-\epsilon,
\]
which can be solved for the product $wd$:
\[
wd=\epsilon+wa+wb+2wab.
\]
By repeated use of this relation, we can eliminate all the products $wd$ in formula~\eqref{E:generic-element}. The resulting formula for $x$, called \emph{reduced form},  takes the same form as before but each coefficient $x_i$, $i\in\{0,1,2,3\}$, is a $\mathbb{Z}$-linear combination of $1$, $w$ and $d$. To prove that the reduced form is unique it is enough to check that $x=0$ if and only if its reduced form vanishes. This can be straightforwardly checked by using the representation in the endomorphism ring of the Abelian group $(\mathbb{Z}/5\mathbb{Z})\oplus\mathbb{Z}^{12}$  obtained by left multiplications. Namely, with respect to the ordered basis
\(
(\epsilon,1,w,d,a,wa,da,b,wb,db,ab,wab,dab)
\)
this representation takes the form
\[
a\mapsto \left(\begin{array}{ccccccccccccc}
2& 0& 0& 0& 1& 2& -2& 0& 0& 0& 2& -1& 1\\
0& 0& 0& 0& -1& 0& 0& 0& 0& 0& 0& 0& 0\\
0& 0& 0& 0& 1& 1& 0& 0& 0& 0& 0& 0& 1\\
0& 0& 0& 0& 0& 0& -1& 0& 0& 0& 0& 0& 0\\
0& 1& 0& 0& -1& 0& 0& 0& 0& 0& 0& 0& 0\\
0& 0& 1& 0& 0& -1& 1& 0& 0& 0& 0& 0& 2\\
0& 0& 0& 1& 0& 0& -1& 0& 0& 0& 0& 0& 0\\
0& 0& 0& 0& 0& 0& 0& 0& 0& 0& -1& 0& 0\\
0& 0& 0& 0& 0& 0& 1& 0& 0& 0& 1& 1& -1\\
0& 0& 0& 0& 0& 0& 0& 0& 0& 0& 0& 0& -1\\
0& 0& 0& 0& 0& 0& 0& 1& 0& 0& -1& 0& 0\\
0& 0& 0& 0& 0& 0& 2& 0& 1& 0& 0& -1& -1\\
0& 0& 0& 0& 0& 0& 0& 0& 0& 1& 0& 0& -1
\end{array}
\right),
\]
\[
b\mapsto
\left(
\begin{array}{ccccccccccccc}
2& 0& 0& 0& 0& 1& 0& -1& -2& -1& -2& -2& -2\\
0& 0& 0& 0& 0& 0& 0& -1& 0& 0& 1& 0& 0\\
0& 0& 0& 0& 0& 0& 3& 1& 1& 0& -1& 0& -1\\
0& 0& 0& 0& 1& 0& -1& 0& 0& -1& 0& 0& 1\\
0& 0& 0& 0& -1& 0& 0& 0& 0& 0& 1& 0& 0\\
0& 0& 0& 0& 0& 0& 0& 0& 0& 1& -1& 1& 0\\
0& 0& 0& 0& 0& 0& -1& 0& 0& 0& 0& 0& 1\\
0& 1& 0& 0& -1& 0& 0& -1& 0& 0& 1& 0& 0\\
0& 0& 1& 0& 0& 0& 0& 0& -1& 1& 0& 0& 0\\
0& 0& 0& 1& 0& 0& -1& 0& 0& -1& 1& 0& 0\\
0& 0& 0& 0& -1& 0& 0& 0& 0& 0& 0& 0& 0\\
0& 0& 0& 0& 0& 1& 0& 0& 0& 2& 0& -1& -1\\
0& 0& 0& 0& 0& 0& -1& 0& 0& 0& 0& 0& 0
\end{array}
\right),
\]
where the numbers on the first lines of these matrices are considered modulo $5$.
\end{proof}
\subsubsection{Remarks}
\begin{enumerate}

\item A $\mathbb{Z}$-linear basis of the center $Z(R/I)$ is given by the elements
\[
\epsilon,\ 1,\ w,\ d,\ p=5wa,\ q=5wb,\ r=wab-2wa-2wb.
\]
\item The quotient ring $R/(L(I)+I)$, which is identified with $(R/I)/\mathbb{Z}\epsilon$, is a symmetric formal $M_2$-ring with the trace function defined by the formulae
    \[
   L(1)=2,\ L(a)=L(b)=-1,\ L(ab)=1+d,
    \]
   and extended linearly over the polynomial ring  $\mathbb{Z}[w,d]$. The elements
   \[
   1,\ w,\ d,\ e=wa+wb+2wab,\ f=wa,\ g=wab
    \]
    constitute a $\mathbb{Z}$-basis of the center $Z(R/(L(I)+I))$, and their traces are given by the formulae
   \[
   L(\lambda)=\left\{\begin{array}{cl}
   2\lambda &\mathrm{if}\ \lambda\in\{1,w,d,e\};\\
   -w&\mathrm{if}\ \lambda=f;\\
   w+e&\mathrm{if}\ \lambda=g.
   \end{array}
   \right.
   \]
   \item The kernel of the surjective ring homomorphism $\alpha_G\colon B'G\to A'G$ is the ideal in $R/I$ generated by the elements $\epsilon$ and $w-d$.
    \end{enumerate}
    

\begin{thebibliography}{9}
\bibitem{Bonahon} F. Bonahon, \emph{Shearing hyperbolic surfaces,
bending pleated surfaces and Thurston's symplectic form.} Ann. Fac. Sci. Toulouse Math 6 (1996), 233--297.
\bibitem{Fock-Chekhov}  V.V. Fock, L.O. Chekhov, \emph{A quantum Teichm\"uller space.}
Theor. Math. Phys., Vol. 120 (1999), no. 3, 1245--1259.
\bibitem{K1} R.M. Kashaev, \emph{Quantization of Teichm\"uller Spaces and the Quantum Dilogarithm.} Lett. Math. Phys. 43 (1998), no. 2, 105--115.
\bibitem{K4} R.M. Kashaev, \emph{Coordinates for the moduli space of flat $PSL (2, \mathbb{R})$-connections.} Math. Res. Lett., Vol. 12 (2005), no. 1,  23--36.
\bibitem{K2} R.M. Kashaev, \emph{On ring-valued invariants of topological pairs.} Preprint arXiv:math/0701543.
\bibitem{K3} R.M. Kashaev, \emph{Delta-groupoids in knot theory.} Preprint arXiv:0908.1261.
\bibitem{KR} R.M. Kashaev, N.Yu. Reshetikhin,  \emph{Symmetrically Factorizable Groups and Set-Theoretical Solutions of the Pentagon Equation.} Contemporary Mathematics, 433 (2007) 267--279.
\bibitem{Penner} R.C. Penner,  \emph{The Decorated Teichmiiller Space
of Punctured Surfaces.} Commun. Math. Phys. 113 (1987), 299--339.
\bibitem{Thurston} W.P. Thurston, \emph{The geometry and topology of 3-manifolds.} Lecture notes, Princeton University, 1976--79.
\bibitem{Tillmann} S. Tillmann, \emph{Degenerations of ideal hyperbolic triangulations.} Preprint arXiv:math/0508295.
\bibitem{Yoshida} T. Yoshida, \emph{On ideal points of deformation curves of hyperbolic 3-manifolds
with one cusp.} Topology 30 (1991), 155--170.
\end{thebibliography}
\end{document}